\def\widebar{\accentset{{\cc@style\underline{\mskip10mu}}}}
\newcommand{\re}{\mathop{\rm Re}\nolimits}
\newcommand{\dom}{\mathop{\rm dom}}
\numberwithin{theorem}{section}
\newcommand{\TheTitle}{Stability Analysis of Perturbed Infinite-dimensional Sampled-data  Systems} 
\newcommand{\TheAuthors}{Masashi Wakaiki and Yutaka Yamamoto}
\title{{\TheTitle}\thanks{
\funding{This work was supported in part by JSPS KAKENHI Grant,
	JP17K14699 and JP19H02161.}}}
\author{
  Masashi Wakaiki\thanks{Graduate School of System Informatics, Kobe University, Nada, Kobe, Hyogo 657-8501, Japan
  	(\email{wakaiki@ruby.kobe-u.ac.jp}).} \and
  Yutaka Yamamoto\thanks{Professor Emeritus, Graduate School of 
  	Informatics, Kyoto University, Kyoto 606-8510, Japan
    ( \email{yy@i.kyoto-u.ac.jp}).}
}
\newtheorem{example}[theorem]{Example}
\begin{document}

\maketitle

\begin{abstract}
	This paper addresses the stability analysis of
infinite-dimensional sampled-data systems
under unbounded perturbations.
We present two classes of unbounded perturbations preserving
the exponential stability of sampled-data systems.
To this end, we investigate the continuity of strongly continuous semigroups
with respect to their generators, considering the uniform operator topology.
\end{abstract}

\begin{keywords}
	Exponential stability,
infinite-dimensional systems,
sampled-data systems,
unbounded perturbation
\end{keywords}


\section{Introduction}
In this paper, we study the robustness of the exponential stability of infinite-dimensional sampled-data 
systems
under unbounded perturbations.
Modern control systems employ digital technology
for the implementation of controllers, and
sampled-data control plays an important role in cyber-physical systems.

It is also worth emphasizing that
the analysis and synthesis of sampled-data systems are of great theoretical interest due to
the interaction between continuous-time and discrete-time dynamics.
One particular approach
to overcome the difficulty arising from this hybrid property
is the lifting technique developed in \cite{Yamamoto1994, Bamieh1992},
which treats the intersample behavior of sampled-data systems in a unified, time-invariant framework.
For infinite-dimensional systems,
various sampled-data control problem have been studied,
for example, 
stabilization \cite{Tarn1988, Rebarber1998, Logemann2005, Logemann2013, Karafyllis2018,Kang2018Automatica},
robustness analysis with respect to sampling for stabilization \cite{Rebarber2002, Logemann2003, Rebarber2006},
and 
output regulation \cite{Logemann1997, Ke2009SCL, Ke2009SIAM, Ke2009IEEE, Wakaiki2019}.
However, relatively little work has been done on the stability analysis of
perturbed  infinite-dimensional sampled-data systems.

Consider the following sampled-data system with
state space $X$ and input space $U$ (both Banach spaces):
\begin{subequations}
	\label{eq:plant}
	\begin{align}
	\label{eq:state_equation}
	&\dot x (t) = Ax(t) +B_0u(t)\quad \forall t \geq 0; \qquad 
	x(0) = x^0 \in X, \\
	\label{eq:input}
	&u(k\tau + t) = F x(k\tau)\qquad \forall t \in[0, \tau),~\forall k \in \mathbb{Z}_+,
	\end{align}
\end{subequations}
where  $x(t) \in X$, $u(t) \in U$, $\tau>0$ is a sampling period, 
$A$ is the generator of a strongly continuous semigroup 
$T(t)$ on $X$, and the input operator $B_0$ 
and the feedback operator $F$ 
are bounded linear operators from $U$ to $X$ and from $X$ to $U$, respectively.
Since we here give only stability analysis and do not go over to feedback design,
we define the bounded operator $B := B_0F$ on $X$ for simplicity of notation.
By the standard theory of strongly continuous semigroups (see, e.g., \cite{Hille1957, Curtain1995, Engel2000, Arendt2001}), 
this abstract evolution equation \eqref{eq:plant}
has the unique solution defined recursively by
\begin{subequations}
	\label{eq:unique_solution}
	\begin{align}
	x(0) &= x^0, \\
	x(k\tau+t) &= T(\tau) x(k\tau) + \int^{t}_0 T(s)Bx(k\tau) ds\quad 
	\forall t \in (0, \tau],~\forall k \in \mathbb{Z}_+.
	\end{align}
\end{subequations}
We assume that 
the nominal sample-data system \eqref{eq:plant} is exponentially stable, which is defined as follows:
\begin{definition}[Exponential stability]
	The sampled-data system \eqref{eq:plant} is exponentially stable with  decay rate 
	greater than $\omega$ if 
	there exist $M \geq 1$ and $\widetilde \omega >\omega$ such that 
	the solution $x(t)$ given by \eqref{eq:unique_solution} satisfies
	\begin{equation}
	\label{eq:ex_stability}
	\|x(t)\| \leq M e^{-\widetilde \omega t} \|x^0\| \qquad 
	\forall x^0 \in X,~\forall t \geq 0. 
	\end{equation}
\end{definition}
Consider a linear operator $D:\dom(D) \subset X \to X$, and
let the generator $A$ in \eqref{eq:state_equation} be perturbed by this operator $D$.
Consequently, the perturbed sampled-data system is written as
\begin{subequations}
	\label{eq:perturbed_plant}
	\begin{align}
	\label{eq:perturbed_state_equation}
	&\dot x (t) = (A+D)x(t) +B_0u(t)\quad \forall t \geq 0; \qquad 
	x(0) = x^0 \in X, \\
	\label{eq:perturbed_input}
	&u(k\tau + t) = F x(k\tau)\qquad \forall t \in[0, \tau),~\forall k \in \mathbb{Z}_+.
	\end{align}
\end{subequations}
The difficulty in analyzing the stability of the perturbed system \eqref{eq:perturbed_plant} 
arises from the unboundedness of the operator $D$.
In this paper, we focus on two typical classes of unbounded perturbations and 
provide conditions for preservation of exponential stability.
We denote by $X_1$ the Banach space $\big(\hspace{-1pt}\dom (A), \|\cdot\|_A\big)$, where $\|\cdot\|_A$ is the graph norm
for $A$, i.e., $\|x\|_A := \|x\|+\|Ax\|$  for $x \in \dom (A)$.
The first theorem shows  that the exponential stability of the sampled-data system is robust
against unbounded perturbations called Miyadera-Voigt perturbations.
\begin{theorem}
	\label{thm:DT_case}
	Suppose that the nominal sampled-data system \eqref{eq:plant}
	is exponentially stable with decay rate greater than $\omega$.
	Choose $t_0>0$ arbitrary.
	There exists $0< q < 1$ such that for every bounded linear operator $D:X_1 \to X$ satisfying 
	\[
	\int^{t_0}_0 \|DT(s)x\|ds \leq q\|x\|\qquad \forall x \in \dom(A),
	\]
	the perturbed sampled-data system  \eqref{eq:perturbed_plant} is also
	exponentially stable with decay rate greater than $\omega$.
\end{theorem}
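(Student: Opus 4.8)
The plan is to reduce the stability question to the discrete-time dynamics governed by the one-step transition (monodromy) operator, to prove that this operator for the perturbed system is close \emph{in the operator norm} to the one for the nominal system, uniformly over all admissible $D$, and then to invoke the stability of geometric power-boundedness under small operator perturbations. First I would introduce $\Delta := T(\tau) + \int_0^\tau T(s)B\,ds$ and $\Delta_D := S(\tau) + \int_0^\tau S(s)B\,ds$, where $S(\cdot)$ is the $C_0$-semigroup generated by $A+D$. From \eqref{eq:unique_solution} and its counterpart for $A+D$ one has $x((k+1)\tau) = \Delta x(k\tau)$ and $x((k+1)\tau) = \Delta_D x(k\tau)$, while $\|x(k\tau + t)\| \le c\,\|x(k\tau)\|$ for $t \in [0,\tau]$ with $c := \sup_{t\in[0,\tau]}\big\|S(t) + \int_0^t S(s)B\,ds\big\| < \infty$. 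Consequently, exponential stability of \eqref{eq:plant} with decay rate greater than $\omega$ implies $\|\Delta^k\| \le M e^{-\widetilde\omega k\tau}$ for some $M \ge 1$ and $\widetilde\omega > \omega$; conversely, a bound $\|\Delta_D^k\| \le K\rho^k$ with $\rho < e^{-\omega\tau}$ yields, via the intersample bound (and writing $\rho = e^{-\nu\tau}$ with $\nu := -\tfrac1\tau\ln\rho > \omega$), exponential stability of \eqref{eq:perturbed_plant} with decay rate greater than $\omega$. Thus everything reduces to establishing such a bound for $\Delta_D$.

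For $q < 1$, the Miyadera--Voigt perturbation theorem (see, e.g., \cite{Engel2000}) ensures that $A+D$ generates a $C_0$-semigroup $S(\cdot)$, obtained from the Dyson--Phillips-type series $S(t) = \sum_{n\ge0}S_n(t)$ with $S_0 = T$ and $S_{n+1}(t)x = \int_0^t S_n(t-s)DT(s)x\,ds$. With $M_0 := \sup_{s\in[0,t_0]}\|T(s)\| \ge 1$, the hypothesis $\int_0^{t_0}\|DT(s)x\|\,ds \le q\|x\|$ gives by induction the operator-norm estimate $\|S_n(t)\| \le M_0 q^n$ for $t \in [0,t_0]$, hence $\sup_{t\in[0,t_0]}\|S(t) - T(t)\| \le M_0 q/(1-q)$; propagating this from $[0,t_0]$ to the fixed interval $[0,\tau]$ by the semigroup law and a telescoping argument yields $\sup_{t\in[0,\tau]}\|S(t) - T(t)\| \le \eta(q)$, where $\eta(q) \to 0$ as $q \to 0^+$ and $\eta$ depends only on $A$, $B$, $\tau$, $t_0$, not on $D$. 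This operator-norm continuity of semigroups with respect to their generators is exactly the analytic ingredient highlighted in the abstract, and it is the step I expect to demand the most care: the textbook Miyadera--Voigt theorem supplies only strong convergence and generation, so one must extract the quantitative, $D$-uniform operator-norm bound from the series and then transport it past $t_0$ to the sampling period $\tau$. Granting this, $\|\Delta_D - \Delta\| \le \|S(\tau) - T(\tau)\| + \|B\|\int_0^\tau \|S(s) - T(s)\|\,ds \le (1 + \tau\|B\|)\,\eta(q) =: \varepsilon(q) \to 0$ as $q \to 0^+$, uniformly in $D$.

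Finally I would fix $\rho_1 := e^{-\widetilde\omega\tau}$, so that $\rho_1 < e^{-\omega\tau}$ and $\|\Delta^k\| \le M\rho_1^k$, and equip $X$ with the equivalent norm $\|x\|_\star := \sup_{k\ge0}\|\Delta^k x\|/\rho_1^k$, which satisfies $\|x\| \le \|x\|_\star \le M\|x\|$ and $\|\Delta x\|_\star \le \rho_1\|x\|_\star$. For every bounded operator $E$ on $X$ this gives $\|(\Delta + E)x\|_\star \le (\rho_1 + M\|E\|)\|x\|_\star$, hence $\|(\Delta+E)^k\| \le M(\rho_1 + M\|E\|)^k$. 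Now choose $q \in (0,1)$ small enough that $\varepsilon(q) < (e^{-\omega\tau} - \rho_1)/M$. Then for every bounded $D : X_1 \to X$ with $\int_0^{t_0}\|DT(s)x\|\,ds \le q\|x\|$ for all $x \in \dom(A)$ we obtain $\|\Delta_D^k\| \le M\rho_2^k$ with $\rho_2 := \rho_1 + M\varepsilon(q) < e^{-\omega\tau}$, and the reduction of the first paragraph shows that \eqref{eq:perturbed_plant} is exponentially stable with decay rate greater than $\omega$, as claimed. The only nonroutine point is the uniform operator-norm semigroup estimate of the second paragraph; the first and third paragraphs are bookkeeping.
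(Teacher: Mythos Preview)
Your proof is correct and follows the same overall strategy as the paper: reduce exponential stability of the sampled-data system to power stability of the monodromy operator $\Delta_\tau$, establish the uniform operator-norm bound $\sup_{t\in[0,\tau]}\|T(t)-T_D(t)\|\to 0$ as $q\to 0$, and conclude via robustness of power stability under small operator perturbations. The paper packages the second step as a stand-alone result (its Theorem~\ref{thm:MV_perturbation_T}) and the third as Lemma~\ref{lem:power_stable_perturb}, but the architecture is identical.

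There are two minor tactical differences worth noting. First, for the bound on $[0,t_0]$ the paper uses the variation-of-parameters formula $T_D(t)x=T(t)x+\int_0^tT(t-s)DT_D(s)x\,ds$ together with the Miyadera--Voigt estimate $\int_0^{t_0}\|DT_D(s)x\|\,ds\le \tfrac{q}{1-q}\|x\|$, while you sum the Dyson--Phillips series $S_n$ directly; both yield the same bound $M_0q/(1-q)$, and both require the same telescoping to reach $[0,\tau]$. Second, for the perturbation of power stability the paper cites a stability-radius result of Wirth--Hinrichsen, whereas your equivalent-norm argument $\|x\|_\star:=\sup_{k\ge0}\rho_1^{-k}\|\Delta^k x\|$ gives a fully self-contained proof of the same fact. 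Neither difference changes the substance of the argument.
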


In the second theorem, we consider only the case where $T(t)$ is an analytic semigroup (see, e.g.,
Chapter~XVII in \cite{Hille1957},
Section~II.4.a in \cite{Engel2000}, and Section~3.7 in \cite{Arendt2001}
for analytic semigroups).
This restriction  allows us to deal with
a larger class
of unbounded perturbations, called 
relatively $A$-bounded perturbations, than in the first theorem.
\begin{theorem}
	\label{thm:RB_case}
	Suppose that the nominal sampled-data system \eqref{eq:plant}
	is exponentially stable with decay rate greater than $\omega$ and that 
	$T(t)$ is an analytic semigroup. 
	There exist $\alpha,\beta>0$ such that for every linear operator $D:\dom(D) \subset X \to X$ 
	satisfying $\dom(A) \subset \dom(D)$ and
	\[
	\|Dx\| \leq \alpha \|Ax\| + \beta \|x\|\qquad \forall x \in \dom(A),
	\] 
	the perturbed sampled-data system  \eqref{eq:perturbed_plant} is also
	exponentially stable with decay rate greater than $\omega$.
\end{theorem}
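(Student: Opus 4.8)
Throughout write $B:=B_0F\in\mathcal L(X)$ as in the paper. The plan is to reduce the exponential stability of \eqref{eq:plant} and \eqref{eq:perturbed_plant} to power stability of their one-step transition operators and then to control the perturbation of this operator through resolvents. By \eqref{eq:unique_solution}, the nominal state satisfies $x(k\tau)=\Delta^k x^0$ and $x(k\tau+t)=\big(T(t)+\int_0^t T(s)\,ds\,B\big)\Delta^k x^0$ for $t\in[0,\tau]$, where $\Delta:=T(\tau)+\int_0^\tau T(s)\,ds\,B\in\mathcal L(X)$. Since $\sup_{t\in[0,\tau]}\|T(t)+\int_0^t T(s)\,ds\,B\|$ is finite, \eqref{eq:plant} is exponentially stable with decay rate greater than $\omega$ exactly when $\|\Delta^k\|\le Me^{-\widetilde\omega k\tau}$ for all $k$, some $M\ge 1$ and some $\widetilde\omega>\omega$; the hypothesis of the theorem supplies such $M,\widetilde\omega$. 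Provided $A+D$ generates a $C_0$-semigroup $S(t)$, the same equivalence applies to \eqref{eq:perturbed_plant} with transition operator $\Delta_D:=S(\tau)+\int_0^\tau S(s)\,ds\,B$, so it suffices to produce $\alpha,\beta>0$ for which $\|\Delta_D^k\|\le M'e^{-\omega' k\tau}$ holds with some $M'\ge 1$ and some $\omega'>\omega$.

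Smallness of $\|\Delta_D-\Delta\|$ is enough for this. Fix $\widehat\rho\in(e^{-\widetilde\omega\tau},e^{-\omega\tau})$ and put $|||x|||:=\sup_{k\ge 0}\widehat\rho^{-k}\|\Delta^k x\|$; then $\|x\|\le|||x|||\le M\|x\|$ and the associated operator norm satisfies $|||\Delta|||\le\widehat\rho$, so $|||\Delta_D|||\le\widehat\rho+M\|\Delta_D-\Delta\|$. Hence once $\|\Delta_D-\Delta\|<(e^{-\omega\tau}-\widehat\rho)/M$ we obtain $|||\Delta_D|||\le\widehat\rho'<e^{-\omega\tau}$, whence $\|\Delta_D^k\|\le M(\widehat\rho')^k=Me^{-\omega' k\tau}$ with $\omega':=-\tau^{-1}\log\widehat\rho'>\omega$. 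Moreover, since $T(t)$ is analytic and $D$ is relatively $A$-bounded, for $\alpha$ below a constant depending only on $A$ the operator $A+D$ is again sectorial and generates an analytic semigroup $S(t)$ (see \cite{Engel2000}), so \eqref{eq:perturbed_plant} is well posed; it remains to estimate $\|\Delta_D-\Delta\|\le\|S(\tau)-T(\tau)\|+\|B\|\int_0^\tau\|S(s)-T(s)\|\,ds$.

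For the semigroup comparison I would pass to the resolvents. Fix a sectorial contour $\Gamma\subseteq\rho(A)$ for $A$ that stays at distance at least $1$ from the vertex of its sector. On $\Gamma$ one has $\|A(\lambda-A)^{-1}\|\le M_1$ and $\|(\lambda-A)^{-1}\|\le M_2/(1+|\lambda|)$ for constants depending only on $A$, so $\|D(\lambda-A)^{-1}\|\le\alpha M_1+\beta M_2=:\gamma$, which is at most $1/2$ once $\alpha,\beta$ are small and tends to $0$ as $(\alpha,\beta)\to 0$. Then $\Gamma\subseteq\rho(A+D)$ with $\|(\lambda-A-D)^{-1}\|\le 2\|(\lambda-A)^{-1}\|$, both semigroups admit the Dunford representation $T(t)=\frac{1}{2\pi i}\int_\Gamma e^{\lambda t}(\lambda-A)^{-1}\,d\lambda$ and likewise for $S(t)$, and the identity $(\lambda-A-D)^{-1}-(\lambda-A)^{-1}=(\lambda-A-D)^{-1}D(\lambda-A)^{-1}$ gives
\[
\|S(t)-T(t)\|\;\le\;\frac{\gamma}{\pi}\int_\Gamma|e^{\lambda t}|\,\|(\lambda-A)^{-1}\|\,|d\lambda|\;=:\;\gamma\,g(t).
\]
Here $g(t)<\infty$ for each $t>0$, and because of the $1/|\lambda|$ decay of $(\lambda-A)^{-1}$ along $\Gamma$ one checks that $g$ is integrable on $(0,\tau]$ (on the rays of $\Gamma$, $g(t)$ grows only like $O(\log(1/t))$ as $t\to 0^+$). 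Therefore $\|S(\tau)-T(\tau)\|\le\gamma\,g(\tau)$ and $\int_0^\tau\|S(s)-T(s)\|\,ds\le\gamma\int_0^\tau g(s)\,ds$, both equal to $\gamma$ times a constant depending only on $A$ and $\tau$.

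Putting the pieces together, $\|\Delta_D-\Delta\|\le\gamma\big(g(\tau)+\|B\|\int_0^\tau g(s)\,ds\big)$, so choosing $\alpha,\beta>0$ small enough that $\gamma=\alpha M_1+\beta M_2\le 1/2$ and $\gamma\big(g(\tau)+\|B\|\int_0^\tau g(s)\,ds\big)<(e^{-\omega\tau}-\widehat\rho)/M$ forces $|||\Delta_D|||<e^{-\omega\tau}$, hence $\|\Delta_D^k\|\le Me^{-\omega' k\tau}$ with $\omega'>\omega$, and then by the first paragraph \eqref{eq:perturbed_plant} is exponentially stable with decay rate greater than $\omega$. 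The main obstacle is the estimate in the third paragraph: because the analytic-semigroup bound makes $\|DT(s)\|$ of order $\alpha/s$, the difference $\|S(s)-T(s)\|$ is not small --- indeed not even bounded uniformly in the perturbation --- as $s\to 0^+$, so no naive Duhamel estimate in $C\big([0,\tau];\mathcal L(X)\big)$ works; the resolvent formulation is exactly what converts the non-integrable $1/s$ behaviour into an $O(\log(1/s))$ singularity and renders $\int_0^\tau\|S(s)-T(s)\|\,ds$ and the single value $\|S(\tau)-T(\tau)\|$ controllable by the relative bound of $D$.
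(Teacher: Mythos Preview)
Your argument is correct and follows the same overall route as the paper: reduce exponential stability to power stability of the one-step map, show that power stability is robust under small operator-norm perturbations (you do this by renorming; the paper invokes the stability-radius result of Wirth--Hinrichsen), and then control $\|\Delta_D-\Delta\|$ through the Dunford resolvent representation of the analytic semigroups.

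The substantive technical difference is in the contour. You fix a single contour $\Gamma$ independent of $t$ and obtain $\|S(t)-T(t)\|\le\gamma\,g(t)$ with $g(t)=O(\log(1/t))$ as $t\to0^+$; since $g$ is integrable on $(0,\tau]$, this suffices to bound $\|S(\tau)-T(\tau)\|+\|B\|\int_0^\tau\|S(s)-T(s)\|\,ds$ directly. The paper instead lets the contour $\gamma_z$ scale with $z$ (the arc has radius $1/|z|$), which makes $\int_{\gamma_z}|e^{\mu z}|\,|\mu|^{-1}\,|d\mu|$ bounded \emph{independently} of $z$ and yields the stronger intermediate conclusion $\|T(z)-T_D(z)\|<\epsilon$ uniformly on $\Sigma_{\delta_1}\cap\mathbb D_r$ (their Theorem~\ref{thm:analytic_T_diff}). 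That uniform bound is of independent interest---it extends a compact-interval continuity result of Hille--Phillips down to $t=0$---whereas your weaker pointwise bound is tailored to precisely what the sampled-data application requires and is a little quicker.

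One correction to your final paragraph: the claim that $\|S(s)-T(s)\|$ is ``not even bounded uniformly in the perturbation as $s\to0^+$'' is wrong. The paper proves exactly such a uniform bound; it is your \emph{upper bound} $\gamma\,g(t)$ that diverges logarithmically, not the actual difference. The $z$-dependent contour is what recovers uniformity. This does not damage your proof, only the motivation you attach to it.
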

Note that the theorems above implicitly assume the existence of the strongly continuous
semigroup generated by the perturbed operator $A+D$.
The existence of the semigroup has been proved 
in \cite{Miyadera1966} for Miyadera-Voigt perturbations
and in \cite[Chapter~XIII]{Hille1957} for relatively $A$-bounded perturbations, respectively; 
see Theorems~\ref{thm:MV_perturbation_case} and
\ref{thm:perturbed_analytic_semigroup}
below for more rigorous statements.

Let $T_D(t)$ denote the strongly continuous semigroup generated by $A+D$.
In the beginning of the proofs of  Theorems~\ref{thm:DT_case} and \ref{thm:RB_case},
we show that if
$\|T(t)-T_D(t)\|$ is sufficiently small on $[0,\tau]$, then the perturbed sampled-data system \eqref{eq:perturbed_plant}
is also
exponentially stable.
To evaluate $\|T(t)-T_D(t)\|$ on $[0,\tau]$, we use the variation of parameter formula for 
Miyadera-Voigt perturbations
and the integral representation of an analytic semigroup for relatively $A$-bounded
perturbations, respectively.

In Section~2,
we present  basic results on the exponential stability of
sampled-data systems and the robustness of power stability.
Section~3 is devoted to the proof of Theorem~\ref{thm:DT_case}.
In Section~4,  we  prove Theorem~\ref{thm:RB_case} and then  provide an example to compare the perturbation classes in
Theorems~\ref{thm:DT_case} and \ref{thm:RB_case}.
In Section~5,
we present several examples to illustrate the obtained results.
\paragraph*{Notation and terminology}
We denote by $\mathbb{Z}_+$ the set of nonnegative integers.
For $r >0$, we define 
$\mathbb{D}_r := \{z \in \mathbb{C}:
|z | < r
\}$ 
and 
$\mathbb{E}_r := \{z \in \mathbb{C}:
|z | > r
\}$.
For a set $\Omega \subset \mathbb{C}$, its closure is denoted by $\overline {\Omega}$.
For $\delta \in (0,\pi/2]$, we define
$
\Sigma_{\delta} :=
\left\{
z \in \mathbb{C} \setminus \{0\}:|\arg z| \leq \delta
\right\}.
$
Let $X$ and $Y$ be Banach spaces. Let us denote by $\mathcal{L}(X,Y)$  the space of
all bounded linear operators from $X$ to $Y$. We write $\mathcal{L}(X)$ for $\mathcal{L}(X,X)$.
An operator $\Delta \in \mathcal{L}(X)$ is said to be {\em power stable}
if there exists constants $M \geq 1$ and $\theta \in (0,1)$ such that $\|\Delta^k\| \leq M \theta^k$ for
every $k \in \mathbb{Z}_+$.
For a linear operator $A$ from $X$ to $Y$, we denote by $\dom (A)$ the domain of $A$.
The resolvent set of a linear operator $A:\dom(A) \subset X \to X$ is denoted by $\varrho(A)$, and
$R(\lambda,A) := (\lambda I - A)^{-1}$ for $\lambda \in \varrho(A)$.

\section{Preliminaries}

For the nominal sampled-data system \eqref{eq:plant},
define the operator $\Delta_\tau \in \mathcal{L}(X)$  by
\begin{equation}
\label{eq:Delta_def}
\Delta_\tau := T(\tau) + \int^\tau_0 T(s)B ds.
\end{equation}
The state $x$ defined by \eqref{eq:unique_solution}
satisfies $x\big((k+1)\tau\big) = \Delta_\tau x(k\tau)$ for every $k \in \mathbb{Z}_+$.

Lemma~\ref{lem:ex_po_stability} below provides the relationship between
the exponential stability of the sampled-data system 
and the power stability of $\Delta_\tau$. One 
can obtain this result by slightly modifying the proof
of Proposition 2.1 in \cite{Rebarber1998}.
\begin{lemma}
	\label{lem:ex_po_stability}
	For any $\tau>0$, the sampled-data system \eqref{eq:plant} is exponentially stable with decay rate greater than $\omega$ 
	if and only if $e^{\omega \tau}\Delta_\tau$ is power stable.
\end{lemma}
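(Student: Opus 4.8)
The plan is to exploit the recursion $x\big((k+1)\tau\big)=\Delta_\tau x(k\tau)$, which gives $x(k\tau)=\Delta_\tau^k x^0$, together with a uniform bound on the intersample evolution. Concretely, I would introduce the propagator $\Phi(t):=T(t)+\int_0^t T(s)B\,ds\in\mathcal{L}(X)$ for $t\in[0,\tau]$, so that $\Phi(\tau)=\Delta_\tau$ and, by \eqref{eq:unique_solution}, $x(k\tau+t)=\Phi(t)\Delta_\tau^k x^0$ for all $t\in[0,\tau]$ and $k\in\mathbb{Z}_+$. Since $T$ is strongly continuous, $\|T(\cdot)\|$ is bounded on the compact interval $[0,\tau]$, and $B\in\mathcal{L}(X)$, so $c:=\sup_{t\in[0,\tau]}\|\Phi(t)\|<\infty$.

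For the ``only if'' direction, exponential stability with decay rate greater than $\omega$ supplies $M\geq 1$ and $\widetilde\omega>\omega$ with $\|x(t)\|\leq Me^{-\widetilde\omega t}\|x^0\|$ for all $x^0\in X$, $t\geq 0$. Evaluating at $t=k\tau$ and taking the supremum over $\|x^0\|\leq 1$ gives $\|\Delta_\tau^k\|\leq Me^{-\widetilde\omega\tau k}$, hence $\|(e^{\omega\tau}\Delta_\tau)^k\|=e^{\omega\tau k}\|\Delta_\tau^k\|\leq M\big(e^{(\omega-\widetilde\omega)\tau}\big)^k$ with $e^{(\omega-\widetilde\omega)\tau}\in(0,1)$; thus $e^{\omega\tau}\Delta_\tau$ is power stable.

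For the ``if'' direction, write $\|(e^{\omega\tau}\Delta_\tau)^k\|\leq M\theta^k$ with $M\geq 1$, $\theta\in(0,1)$, so $\|\Delta_\tau^k\|\leq M(\theta e^{-\omega\tau})^k$. Given $t\geq 0$, put $k:=\lfloor t/\tau\rfloor$ and $s:=t-k\tau\in[0,\tau)$. Then $\|x(t)\|=\|\Phi(s)\Delta_\tau^k x^0\|\leq cM\,\theta^k e^{-\omega\tau k}\|x^0\|$. Setting $\eta:=-(\ln\theta)/\tau>0$ gives $\theta^k e^{-\omega\tau k}=e^{-(\omega+\eta)(t-s)}$, and absorbing the factor $e^{(\omega+\eta)s}$, which is bounded on $[0,\tau)$, into the constant yields $\|x(t)\|\leq M'e^{-(\omega+\eta)t}\|x^0\|$ for a suitable $M'\geq 1$. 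Since $\omega+\eta>\omega$, this is exponential stability with decay rate greater than $\omega$.

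The argument is essentially bookkeeping, as the quoted modification of \cite[Proposition~2.1]{Rebarber1998} suggests; the only points needing a little care are the finiteness of the intersample bound $c$ and the passage between the geometric decay rate $\theta$ of the discrete iteration and the continuous exponential rate via $\theta=e^{-\eta\tau}$, together with the verification that the final constant may be taken $\geq 1$. I do not expect any substantial obstacle beyond these routine checks.
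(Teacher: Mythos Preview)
Your argument is correct and is precisely the routine verification the paper alludes to; the paper does not spell out a proof but simply cites \cite[Proposition~2.1]{Rebarber1998}, and your decomposition $x(k\tau+t)=\Phi(t)\Delta_\tau^k x^0$ together with the uniform intersample bound $c=\sup_{t\in[0,\tau]}\|\Phi(t)\|<\infty$ (finite since $\Phi(0)=I$ and $t\mapsto\Phi(t)$ is strongly continuous, hence uniformly bounded on $[0,\tau]$) is exactly the intended modification. The only cosmetic point is that $c\geq\|\Phi(0)\|=1$, so your final constant $M'=cM\sup_{s\in[0,\tau)}e^{(\omega+\eta)s}\geq 1$ automatically, as you note.
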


By Corollary~4.5 of \cite{Wirth1994},
which gives the stability radius of an infinite-dimensional discrete-time system,
one can also immediately obtain the following lemma:
\begin{lemma}
	\label{lem:power_stable_perturb}
	Let $X$ be a Banach space, and let $\Delta_1 \in \mathcal{L}(X)$ and $\kappa >0$. If 
	$\kappa \Delta_1 $ is power stable, then
	there exists $\epsilon >0$ such that $\kappa \Delta_2$ is also power stable for
	every $\Delta_2 \in \mathcal{L}(X)$ satisfying 
	$\|\Delta_1 - \Delta_2\|< \epsilon$.
\end{lemma}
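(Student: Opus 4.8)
The plan is to reduce the claim to a standard perturbation argument for spectra, using that for a single bounded operator $\Delta \in \mathcal{L}(X)$ power stability is equivalent to $\sigma(\Delta) \subset \mathbb{D}_1$. One direction is clear; for the other, note that when $\sigma(\Delta)$ --- which is compact --- is contained in the open disk $\mathbb{D}_1$, the spectral radius $r(\Delta) = \lim_{k\to\infty}\|\Delta^k\|^{1/k}$ is strictly less than $1$, so choosing any $\theta \in (r(\Delta),1)$ makes $\|\Delta^k\|\theta^{-k}\to 0$, whence $M := \max\{1,\sup_{k\ge 0}\|\Delta^k\|\theta^{-k}\} < \infty$ and $\|\Delta^k\| \le M\theta^k$ for all $k \in \mathbb{Z}_+$. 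Thus it suffices to produce $\epsilon>0$ such that $\|\Delta_1 - \Delta_2\| < \epsilon$ forces $\sigma(\kappa\Delta_2) \subset \mathbb{D}_1$.

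First I would pin the perturbed spectrum inside a fixed compact annulus. Since $\kappa\Delta_1$ is power stable, $\sigma(\kappa\Delta_1) \subset \mathbb{D}_1$, so $\overline{\mathbb{E}_1} \subset \varrho(\kappa\Delta_1)$ and, by continuity of the resolvent on the resolvent set together with compactness, $C := \sup_{1 \le |z| \le \rho}\|R(z,\kappa\Delta_1)\| < \infty$, where $\rho := \kappa(\|\Delta_1\|+1)$. For any $\Delta_2$ with $\|\Delta_1-\Delta_2\| < 1$ the crude bound $\|\kappa\Delta_2\| < \rho$ gives $\sigma(\kappa\Delta_2) \subset \overline{\mathbb{D}_\rho}$, so it only remains to keep $\sigma(\kappa\Delta_2)$ off the annulus $\mathcal{A} := \{z \in \mathbb{C}: 1 \le |z| \le \rho\}$. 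For $z \in \mathcal{A}$ I would use the factorization
\[
zI - \kappa\Delta_2 = \big(zI - \kappa\Delta_1\big)\big(I - R(z,\kappa\Delta_1)\,\kappa(\Delta_2-\Delta_1)\big),
\]
whose second factor is invertible (Neumann series) as soon as $C\kappa\|\Delta_1-\Delta_2\| < 1$; hence $\mathcal{A} \subset \varrho(\kappa\Delta_2)$. Taking $\epsilon := \min\{1,\, 1/(C\kappa)\}$ then yields $\sigma(\kappa\Delta_2) \subset \overline{\mathbb{D}_\rho}\setminus\mathcal{A} = \mathbb{D}_1$, so $\kappa\Delta_2$ is power stable.

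The only genuinely delicate point is obtaining a resolvent bound that is uniform over the \emph{a priori} unbounded region $\overline{\mathbb{E}_1}$ one must control; the remedy above is the elementary norm estimate $\|\kappa\Delta_2\| < \rho$, which confines $\sigma(\kappa\Delta_2)$ to $\overline{\mathbb{D}_\rho}$ and thereby reduces the problem to the compact annulus $\mathcal{A}$, on which $C < \infty$ is immediate. (Equivalently, the statement is just the specialization to additive perturbations of the discrete-time stability-radius result \cite[Corollary~4.5]{Wirth1994}, which already guarantees a positive stability radius for the power-stable operator $\kappa\Delta_1$.)
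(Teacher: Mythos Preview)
Your proof is correct. The paper itself does not give a proof at all: it simply invokes \cite[Corollary~4.5]{Wirth1994} on stability radii of infinite-dimensional discrete-time systems and states that the lemma follows immediately. You take a different, fully self-contained route via the spectral characterization $r(\Delta)<1$ (Gelfand's formula) together with a standard Neumann-series perturbation of the resolvent on the compact annulus $\{1\le |z|\le \rho\}$; you then also note the Wirth--Hinrichsen shortcut at the end. Your argument is more elementary and avoids importing the machinery of stability radii, while the paper's one-line citation is shorter but opaque. One tiny remark: when $\rho=\kappa(\|\Delta_1\|+1)<1$ the annulus $\mathcal{A}$ is empty and your definition of $C$ becomes vacuous, but in that case $\sigma(\kappa\Delta_2)\subset\overline{\mathbb{D}_\rho}\subset\mathbb{D}_1$ already holds for every $\|\Delta_1-\Delta_2\|<1$, so the conclusion is immediate; you may want to add a clause to that effect.
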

%

Applying Lemma~\ref{lem:power_stable_perturb} to the operator $\Delta_\tau$ defined by
\eqref{eq:Delta_def}, we obtain the following simple result.
\begin{proposition}
	\label{lem:semigroup_epsilon}
	For $j \in \{1,2\}$,
	let $T_j$ be strongly continuous semigroups and define
	\[
	\Delta_{\tau,j} := T_j(\tau) + \int^\tau_0 T_j(s)B ds.
	\]
	For a given $\kappa >0$,
	suppose that $\kappa \Delta_{\tau,1} $ is power stable. 
	There exists $\epsilon>0$ such that 
	the inequality
	\begin{equation}
	\label{eq:T1T2_diff}
	\|T_1(t) - T_2(t)\| < \epsilon \qquad \forall t \in [0,\tau]
	\end{equation}
	implies that $\kappa \Delta_{\tau,2} $ is also power stable.
\end{proposition}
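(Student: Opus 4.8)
The plan is to reduce Proposition~\ref{lem:semigroup_epsilon} to Lemma~\ref{lem:power_stable_perturb} by showing that the map sending a semigroup (restricted to $[0,\tau]$, in the uniform operator topology) to the operator $\Delta_\tau$ of \eqref{eq:Delta_def} is Lipschitz, or at least continuous, in the appropriate sense. Concretely, I would set $\Delta_1 := \Delta_{\tau,1}$ and $\Delta_2 := \Delta_{\tau,2}$ and estimate $\|\Delta_{\tau,1} - \Delta_{\tau,2}\|$ in terms of $\sup_{t\in[0,\tau]}\|T_1(t) - T_2(t)\|$, and then invoke Lemma~\ref{lem:power_stable_perturb} with this $\kappa$.

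First I would write
\[
\Delta_{\tau,1} - \Delta_{\tau,2} = \big(T_1(\tau) - T_2(\tau)\big) + \int_0^\tau \big(T_1(s) - T_2(s)\big) B\, ds,
\]
which is valid since $B \in \mathcal{L}(X)$ and each integrand is strongly continuous, hence Bochner integrable on $[0,\tau]$. Taking norms and using $\big\|\int_0^\tau (T_1(s)-T_2(s))B\,ds\big\| \leq \int_0^\tau \|T_1(s)-T_2(s)\|\,\|B\|\,ds \leq \tau \|B\| \sup_{s\in[0,\tau]}\|T_1(s)-T_2(s)\|$, I get
\[
\|\Delta_{\tau,1} - \Delta_{\tau,2}\| \leq (1 + \tau\|B\|)\sup_{t\in[0,\tau]}\|T_1(t) - T_2(t)\|.
\]
Thus if \eqref{eq:T1T2_diff} holds with a suitable $\epsilon$, then $\|\Delta_{\tau,1} - \Delta_{\tau,2}\|$ is as small as we please.

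Now apply Lemma~\ref{lem:power_stable_perturb} with $X$ the given Banach space, $\Delta_1 := \Delta_{\tau,1}$, and the given $\kappa$: since $\kappa\Delta_{\tau,1}$ is power stable by hypothesis, there exists $\epsilon' > 0$ such that $\kappa\Delta_2$ is power stable for every $\Delta_2 \in \mathcal{L}(X)$ with $\|\Delta_{\tau,1} - \Delta_2\| < \epsilon'$. Choosing $\epsilon := \epsilon' / (1 + \tau\|B\|)$, the bound above shows that \eqref{eq:T1T2_diff} forces $\|\Delta_{\tau,1} - \Delta_{\tau,2}\| < \epsilon'$, hence $\kappa\Delta_{\tau,2}$ is power stable, as claimed. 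There is no real obstacle here: the only point requiring a line of care is the justification that $\int_0^\tau(T_1(s)-T_2(s))B\,ds$ is a well-defined element of $\mathcal{L}(X)$ with the stated norm bound, which follows from strong continuity of $s\mapsto T_j(s)$ together with the uniform boundedness of $\|T_j(s)\|$ on the compact interval $[0,\tau]$ and boundedness of $B$. The substantive content of the proposition is entirely carried by Lemma~\ref{lem:power_stable_perturb}; this proof is just the elementary continuity estimate that packages it.
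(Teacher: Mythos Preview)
Your proposal is correct and follows essentially the same argument as the paper's own proof: both write $\Delta_{\tau,1}-\Delta_{\tau,2}=(T_1(\tau)-T_2(\tau))+\int_0^\tau (T_1(s)-T_2(s))B\,ds$, bound this by $\epsilon(1+\tau\|B\|)$, and then invoke Lemma~\ref{lem:power_stable_perturb}. Your version is slightly more explicit in spelling out the choice $\epsilon=\epsilon'/(1+\tau\|B\|)$ and in justifying the integral, but there is no substantive difference.
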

\begin{proof}
	Suppose that \eqref{eq:T1T2_diff} holds for some $\epsilon>0$. 
	Since
	\[
	\Delta_{\tau,1} - \Delta_{\tau,2} = 
	T_1(\tau) - T_2(\tau) 
	+
	\int^\tau_0 \big[
	T_1(s) - T_2(s)
	\big] B ds,
	\]
	it follows that 
	\begin{align*}
	\|\Delta_{\tau,1} - \Delta_{\tau,2} \|
	&\leq 
	\|T_1(\tau) - T_2(\tau)\| + 
	\int^\tau_0	\|T_1(s) - T_2(s)\| \cdot 
	\| B\| ds \\
	&\leq \epsilon
	\big(
	1 + \tau \|B\|
	\big).
	\end{align*}
	Thus, Lemma~\ref{lem:power_stable_perturb} yields the desired conclusion.
\end{proof}

Motivated by Lemma~\ref{lem:ex_po_stability} and 
Proposition~\ref{lem:semigroup_epsilon}, 
we will study  the continuity of strongly continuous semigroups 
with respect to their generators in Sections~3 and 4.
For the strong operator topology, this continuity has been obtained as the Trotter-Kato Approximation theorem 
in \cite{Trotter1958, Kato1959};
see also Section~III.4.b in \cite{Engel2000} and Section 3.6 in \cite{Arendt2001}.
However, we here need the continuity for the uniform operator topology.
Theorems~13.5.8 and 13.7.3 of \cite{Hille1957} show that 
strongly continuous semigroups have such a continuity property uniformly with respect to $t$
in a compact interval of $(0,\infty)$.
These results are not quite sufficient for our purpose, 
because the continuity property in the interval $[0,\tau]$ is 
required in Proposition~\ref{lem:semigroup_epsilon}.
The main results in Sections~3 and 4,
Theorems~\ref{thm:MV_perturbation_T} and \ref{thm:analytic_T_diff},
can be regarded as extensions of
Theorems~13.5.8 and 13.7.3 of \cite{Hille1957}, respectively.


\section{General semigroup and Miyadera-Voigt perturbations}
\label{sec:general_case}
The objective of this section is to prove Theorem~\ref{thm:DT_case}.
We first recall the perturbation theorem of Miyadera-Voigt. This theorem
guarantees that for the class of perturbations $D$ in Theorem~\ref{thm:DT_case},
the perturbed operator $A+D$ generates a strongly continuous semigroup.
\begin{theorem}[\cite{Miyadera1966}, Corollary~III.3.16 of \cite{Engel2000}]
	\label{thm:MV_perturbation_case}
	Let $A$ be the generator of a strongly continuous semigroup $T(t)$ on a Banach space $X$, and
	let $D\in \mathcal{L}(X_1,X)$ satisfy
	\begin{equation}
	\label{eq:MV_perturvation}
	\int^{t_0}_0 \|DT(s)x\| ds \leq q \|x\|\qquad \forall x \in \dom(A)
	\end{equation}
	for some $0\leq q < 1$ and $t_0 >0$.
	Then the sum $A+D$ with domain $\dom(A+D) := \dom(A)$ generates a strongly continuous
	semigroup $T_D(t)$ on X. Moreover, 
	for every $x \in \dom(A)$ and every $t \geq 0$,
	$T_D(t)$ satisfies
	\begin{subequations}
		\begin{align}
		&T_D(t) x = T(t)x + \int^t_0 T(t-s)DT_D(s)x ds, \label{eq:TD_rep}\\
		&\int^{t_0}_0 \|DT_D(s)x\| ds \leq \frac{q}{1-q} \|x\|. \label{eqDTD_bound}
		\end{align}
	\end{subequations}
\end{theorem}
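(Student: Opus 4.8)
The plan is to construct the semigroup $T_D(t)$ directly from the variation-of-parameters formula \eqref{eq:TD_rep}, to show that it is strongly continuous and obeys the semigroup law, and finally to identify its generator with $A+D$. The key device is to recast \eqref{eq:TD_rep} as a linear Volterra equation for the auxiliary function $s\mapsto DT_D(s)x$; this sidesteps the difficulty that the integral term in \eqref{eq:TD_rep} need not lie in $\dom(A)$, which is precisely what makes a naive iteration awkward.

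First I would give $DT(\cdot)x$ a meaning for every $x\in X$. For $x\in\dom(A)$ the map $s\mapsto DT(s)x$ is continuous from $[0,\infty)$ into $X$, since $s\mapsto T(s)x$ is continuous in the graph norm and $D\in\mathcal{L}(X_1,X)$; combining \eqref{eq:MV_perturvation} with the semigroup law gives, on each bounded interval, an estimate $\int_0^{\sigma}\|DT(s)x\|\,ds\le c_\sigma\|x\|$, so by density $x\mapsto DT(\cdot)x$ extends to a bounded map from $X$ into $L^1_{\mathrm{loc}}([0,\infty);X)$. Using Fubini's theorem together with \eqref{eq:MV_perturvation}, the Volterra operator $(\mathcal{V}g)(t):=\int_0^t DT(t-s)g(s)\,ds$ satisfies $\|\mathcal{V}g\|_{L^1([0,t_0];X)}\le q\,\|g\|_{L^1([0,t_0];X)}$, so $I-\mathcal{V}$ is invertible on $L^1([0,t_0];X)$ with inverse of norm at most $(1-q)^{-1}$. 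Setting $q_D(\cdot)x:=(I-\mathcal{V})^{-1}\big[DT(\cdot)x\big]$, one obtains the unique $L^1$ solution of $q_D(\cdot)x=DT(\cdot)x+\mathcal{V}q_D(\cdot)x$ and the estimate $\|q_D(\cdot)x\|_{L^1([0,t_0];X)}\le\frac{q}{1-q}\|x\|$, which is \eqref{eqDTD_bound}; extending $q_D(\cdot)x$ interval by interval gives a solution on $[0,\infty)$. I would then \emph{define} $T_D(t)x:=T(t)x+\int_0^t T(t-s)q_D(s)x\,ds$, which is \eqref{eq:TD_rep} (equivalently, $T_D=\sum_{n\ge0}T_n$ with the Dyson--Phillips-type iterates $T_0=T$ and $T_{n+1}(t)x=\int_0^t T(t-s)q_n(s)x\,ds$, $q_n=\mathcal{V}^n[DT(\cdot)x]$). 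This $T_D(t)$ lies in $\mathcal{L}(X)$, is locally bounded and strongly continuous, with $T_D(0)=I$.

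The semigroup law $T_D(t+r)=T_D(t)T_D(r)$ then follows from a Fubini computation combined with uniqueness: one checks that $s\mapsto q_D(s)\big(T_D(r)x\big)$ and $s\mapsto q_D(s+r)x$ solve the same Volterra equation, hence coincide, and substituting this into $T_D(t)T_D(r)x$ reassembles $T_D(t+r)x$. For the generator, write $A_D$ for the generator of $T_D$; Laplace-transforming \eqref{eq:TD_rep} turns the convolution into a product, and after also transforming the Volterra equation for $q_D$ (using that $\int_0^\infty e^{-\lambda t}DT(t)\,dt$ equals $DR(\lambda,A)$ on $\dom(A)$ and extends to an element of $\mathcal{L}(X)$) one arrives, for $\re\lambda$ large, at $R(\lambda,A_D)=R(\lambda,A)\big(I-DR(\lambda,A)\big)^{-1}$. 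The standard resolvent identity for perturbations then identifies the right-hand side with $R(\lambda,A+D)$, where $\dom(A+D)=\dom(A)$, so $A_D=A+D$.

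The main obstacle is making this last step rigorous, i.e.\ showing that $I-DR(\lambda,A)\in\mathcal{L}(X)$ is invertible for large real $\lambda$; the hypothesis $q<1$ (rather than just the $A$-boundedness of $D$, whose constant may be large) is exactly what is needed here. I would prove $\limsup_{\lambda\to\infty}\|DR(\lambda,A)\|_{\mathcal{L}(X)}\le q<1$ by writing, for $x\in\dom(A)$, $\|DR(\lambda,A)x\|\le\int_0^\infty e^{-\lambda t}\|DT(t)x\|\,dt=\lambda\int_0^\infty e^{-\lambda t}\Phi(t)\,dt$ with $\Phi(t):=\int_0^t\|DT(s)x\|\,ds$, and then splitting the integral at $t_0$: on $[0,t_0]$ one has $\Phi(t)\le q\|x\|$ by \eqref{eq:MV_perturvation}, while on $[t_0,\infty)$ the at-most-exponential growth of $\Phi$ is suppressed by the factor $e^{-\lambda t_0}$ as $\lambda\to\infty$. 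Once invertibility is available, a Neumann-series estimate makes the Laplace-transform computation rigorous, and the construction on $[0,t_0]$ is finally propagated to $[0,\infty)$ by the semigroup law, yielding \eqref{eq:TD_rep}--\eqref{eqDTD_bound} as stated.
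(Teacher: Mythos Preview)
The paper does not supply a proof of this theorem at all: it is quoted verbatim as a known result, with attribution to Miyadera \cite{Miyadera1966} and to Corollary~III.3.16 of Engel--Nagel \cite{Engel2000}, and is then used as a black box in the proof of Theorem~\ref{thm:MV_perturbation_T}. There is therefore no ``paper's own proof'' to compare your proposal against.

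That said, your sketch is correct and is essentially the argument given in \cite{Engel2000}, Section~III.3.c. The reformulation of \eqref{eq:TD_rep} as a Volterra equation for $s\mapsto DT_D(s)x$, the contraction estimate $\|\mathcal{V}\|_{\mathcal{L}(L^1([0,t_0];X))}\le q$ obtained via Fubini, the resulting Dyson--Phillips series $T_D=\sum_{n\ge 0}T_n$, and the identification of the generator through the resolvent identity $R(\lambda,A_D)=R(\lambda,A)\big(I-DR(\lambda,A)\big)^{-1}$ all match the treatment there. Your handling of the one genuinely delicate point---showing $\limsup_{\lambda\to\infty}\|DR(\lambda,A)\|\le q$ by integrating by parts to introduce $\Phi(t)=\int_0^t\|DT(s)x\|\,ds$ and splitting at $t_0$---is also the standard device (cf.\ Lemma~III.3.17 in \cite{Engel2000}); just make explicit that the bound $\Phi(t)\le Ce^{\omega t}\|x\|$ on $[t_0,\infty)$, which you need for the tail to vanish uniformly in $x$, follows from iterating \eqref{eq:MV_perturvation} along multiples of $t_0$ together with the exponential bound on $T(t)$.
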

We call \eqref{eq:TD_rep} the {\em variation of parameter formula} for
the perturbed semigroup $T_D(t)$.

Using Theorem~\ref{sec:general_case}, we obtain the following result:
\begin{theorem}
	\label{thm:MV_perturbation_T}
	Let $A$ be the generator of a strongly continuous semigroup $T(t)$ on a Banach space $X$, and
	choose $t_0 > 0$ arbitrarily.
	For every $\epsilon,\tau>0$,
	there exists $0< q < 1$ such that for every $D\in \mathcal{L}(X_1,X)$ satisfying \eqref{eq:MV_perturvation},
	the perturbed semigroup  $T_D(t)$ generated by $A+D$ satisfies
	\begin{align}
	\label{eq:MV_perturbation_T}
	\|T(t) - T_D(t)\| < \epsilon \qquad \forall t \in [0,\tau ].
	\end{align}
\end{theorem}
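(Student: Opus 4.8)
The plan is to estimate $\|T(t)-T_D(t)\|$ on $[0,\tau]$ directly from the variation of parameter formula \eqref{eq:TD_rep} in Theorem~\ref{thm:MV_perturbation_case}, and to show that this estimate can be made smaller than any prescribed $\epsilon$ by choosing $q$ small. Since both $T(t)$ and $T_D(t)$ are uniformly bounded on the compact interval $[0,\tau]$ — say $\|T(t)\|\le \Omega$ and $\|T_D(t)\|\le \Omega$ for $t\in[0,\tau]$, where $\Omega$ depends only on $A$, $D$ through the growth bound, and $\tau$ — the difficulty is that the integrand in \eqref{eq:TD_rep} involves the unbounded operator $D$ acting on $T_D(s)x$, so one cannot simply pull out $\|D\|$. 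The remedy is to invoke the a priori bound \eqref{eqDTD_bound}, which controls exactly the quantity $\int_0^{t_0}\|DT_D(s)x\|\,ds$.

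First I would observe that it suffices to prove the bound for $x\in\dom(A)$ with $\|x\|\le 1$, since $\dom(A)$ is dense and both operators are bounded; thus \eqref{eq:MV_perturbation_T} will follow by continuity. For such $x$ and $t\in[0,\tau]$, \eqref{eq:TD_rep} gives
\[
\|T(t)x-T_D(t)x\| \le \int_0^t \|T(t-s)\|\cdot\|DT_D(s)x\|\,ds \le \Omega \int_0^t \|DT_D(s)x\|\,ds.
\]
If $\tau\le t_0$, then $\int_0^t\|DT_D(s)x\|\,ds\le \int_0^{t_0}\|DT_D(s)x\|\,ds\le \frac{q}{1-q}$ by \eqref{eqDTD_bound}, and we are done once $\Omega\,\frac{q}{1-q}<\epsilon$. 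The case $\tau>t_0$ requires a cocycle/semigroup argument: partition $[0,\tau]$ into finitely many subintervals of length at most $t_0$, and on the $j$-th block use the semigroup property $T_D(s)=T_D(s-jt_0)T_D(jt_0)$ together with \eqref{eqDTD_bound} applied to the vector $T_D(jt_0)x$, whose norm is at most $\Omega$. This yields $\int_0^\tau\|DT_D(s)x\|\,ds\le N\,\Omega\,\frac{q}{1-q}$, where $N=\lceil \tau/t_0\rceil$ depends only on $\tau$ and $t_0$. Hence
\[
\|T(t)-T_D(t)\| \le N\,\Omega^2\,\frac{q}{1-q}\qquad \forall t\in[0,\tau],
\]
and it remains to choose $q\in(0,1)$ small enough that the right-hand side is below $\epsilon$.

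The one genuine subtlety — and what I expect to be the main obstacle — is that the bound $\Omega$ on $\|T_D(t)\|$ over $[0,\tau]$ must be uniform over the whole class of admissible perturbations $D$, i.e., it must not depend on $D$ except through $q$ and $t_0$. This is where I would lean on Theorem~\ref{thm:MV_perturbation_case} more carefully: the construction of $T_D(t)$ there (the Dyson–Phillips-type series $T_D = \sum_n T_n$ with $T_{n+1}(t)x = \int_0^t T(t-s)D T_n(s)x\,ds$) yields growth estimates for $T_D$ in terms of the growth bound $(M,w)$ of $T$ and the constant $q$ only, so $\Omega$ can indeed be taken independent of $D$ within the class \eqref{eq:MV_perturvation} once $q$ is fixed (say $q\le 1/2$). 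Concretely, $\|T_D(t)\|\le \frac{M}{1-q}e^{wt}\cdot(\text{something depending only on }q,t_0)$ on $[0,\tau]$; I would extract this from the series representation. Once this uniform bound is in hand, the final choice of $q$ is a trivial calculation, and the theorem follows.
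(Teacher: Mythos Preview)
Your argument is correct, and its skeleton---variation of parameters, density of $\dom(A)$, splitting $[0,\tau]$ into blocks of length $t_0$---matches the paper's. The substantive difference is in how the blocks are handled. You bound $\int_0^\tau\|DT_D(s)x\|\,ds$ directly by applying \eqref{eqDTD_bound} to the shifted vectors $T_D(jt_0)x$, which forces you to secure an a priori bound $\|T_D(t)\|\le\Omega$ on $[0,\tau]$ uniform over all admissible $D$; you correctly flag this as the crux and propose to extract it from the Dyson--Phillips series behind Theorem~\ref{thm:MV_perturbation_case}. The paper avoids this external input altogether: after getting $\|T-T_D\|\le q_0:=\frac{Mq}{1-q}$ on $[0,t_0]$, it uses the telescoping estimate
\[
\|T(kt_0+t)-T_D(kt_0+t)\|\le \|T(t)\|\,\|T(kt_0)-T_D(kt_0)\| + \|T_D(kt_0)\|\,\|T(t)-T_D(t)\|,
\]
and bounds $\|T_D(kt_0)\|\le M+q_{k-1}$ \emph{via the induction hypothesis itself} (namely $\|T(kt_0)-T_D(kt_0)\|\le q_{k-1}$), yielding the recursion $q_k=(M+q_0)q_{k-1}+Mq_0$. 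So the bound on $T_D$ is bootstrapped from the running bound on $T-T_D$ rather than imported. Your route is more direct once the uniform bound on $T_D$ is in hand; the paper's recursion is more self-contained, needing nothing beyond the statement of Theorem~\ref{thm:MV_perturbation_case}.
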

\begin{proof}
	Let $t_0,\tau>0$ be given, and suppose that $D\in \mathcal{L}(X_1,X)$ satisfies \eqref{eq:MV_perturvation}
	for some $0< q < 1$.
	By \eqref{eq:TD_rep}, we obtain
	\begin{equation}
	\label{eq:T-TD_diff}
	T(t)x - T_D(t)x = \int^{t}_0T(t-s)DT_D(s)x ds \qquad \forall x \in \dom(A),~ \forall t\geq 0.
	\end{equation}
	
	Since every strongly continuous semigroup is uniformly bounded on
	a compact interval,
	there exists $M\geq 1$ such that $\|T(t)\| \leq M$ for every $t \in [0,\tau]$.
	Let $nt_0 \leq \tau < (n+1)t_0$ with $n \in \mathbb{Z}_+$.
	In the case $n=0$, we obtain $\tau < t_0$. Therefore, \eqref{eqDTD_bound} and \eqref{eq:T-TD_diff} yield
	\begin{align*}
	\|T(t)x - T_D(t)x \| &\leq 
	M\int^{t}_0\|DT_D(s)x\| ds 
	\leq M\int^{t_0}_0\|DT_D(s)x\| ds \\
	&\leq \frac{Mq}{1-q} \|x\|\qquad \forall x \in \dom(A),~\forall t \in [0,\tau].
	\end{align*}
	Since $\dom (A)$ is dense in $X$, we obtain
	\begin{equation}
	\label{eq:T_TD_diff1}
	\|T(t)- T_D(t) \| \leq \frac{Mq}{1-q} \qquad \forall t \in [0,\tau].
	\end{equation}
	
	Next consider the case $n \geq 1$. Similarly to the case $n=0$, we have
	\[
	\|T(t)- T_D(t) \| \leq \frac{Mq}{1-q}=:q_0\qquad \forall t \in [0,t_0].
	\]
	For every $k\in \mathbb{N}$ and every $t \in [0,t_0]$,
	\begin{align*}
	&\|T(kt_0+t) - T_D(kt_0+t)\| \\
	&\qquad \leq 
	\|T(kt_0+t) - T(t)T_D(kt_0)\| + \|T(t)T_D(kt_0)- T_D(kt_0+t)\| \\
	&\qquad \leq
	\|T(t)\| \cdot \|T(kt_0) - T_D(kt_0)\| + \|
	T_D(kt_0)
	\| \cdot \| 
	T(t)- T_D(t)
	\|.
	\end{align*}
	Hence, 
	for every $k \in \{1,\dots,n\}$,
	if $q_{k-1}>0$ satisfies
	\[
	\big\|T \big((k-1)t_0 +t \big) - T_D\big((k-1)t_0 +t \big) \| \leq q_{k-1}\qquad \forall t \in [0,t_0],
	\]
	then
	\[
	\|T(kt_0+t) - T_D(kt_0+t)\| \leq(M+q_0)q_{k-1} + Mq_0\qquad \forall t \in [0,t_0].
	\]
	Consider the sequence $\{q_k\}_{k \in \mathbb{Z}_+}$ constructed by
	\[
	q_k = (M+q_0)q_{k-1} +Mq_0\quad \forall k \in \mathbb{N};\qquad
	q_0 = \frac{Mq}{1-q}.
	\]
	Then $\{q_k\}_{k \in \mathbb{Z}_+}$ is increasing and satisfies
	\[
	\|T(kt_0+t) - T_D(kt_0+t)\| \leq q_k\qquad \forall t \in [0,t_0],~
	\forall k \in \{0,\dots,n  \}.
	\]
	Hence, we obtain
	$	\|T(t)- T_D(t) \| \leq q_n$
	for all $t \in [0,\tau]$.
	By definition, $q_n$ is continuous with respect to $q \in (0,1)$ and decreases to 0 as $q \to 0$.
	Thus for every $\epsilon,\tau >0$, there exists $0 <q < 1$ such that 
	\eqref{eq:MV_perturbation_T} holds.
	This completes the proof.
\end{proof}
From Theorem~\ref{thm:MV_perturbation_T}, 
we can easily obtain the following result on bounded perturbations.
\begin{corollary}
	Let $A$ be the generator of a strongly continuous semigroup $T(t)$ on a Banach space $X$.
	For every $\epsilon,\tau>0$,
	there exists $d >0$ such that for every
	$D \in \mathcal{L}(X,X)$ satisfying $\|D\|\leq d$, the perturbed semigroup  $T_D(t)$ generated by $A+D$ satisfies
	$\|T(t) - T_D(t)\| < \epsilon$ \hspace{-1pt} for all $t \in [0,\tau ]$.
\end{corollary}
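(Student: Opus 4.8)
The plan is to obtain this corollary as a direct specialization of Theorem~\ref{thm:MV_perturbation_T}, using the observation that a bounded perturbation $D\in\mathcal L(X)$ is in particular a Miyadera--Voigt perturbation whose associated constant can be made arbitrarily small by shrinking $\|D\|$. First I would fix $t_0>0$ (any value, as permitted in Theorem~\ref{thm:MV_perturbation_T}) and note that, since every strongly continuous semigroup is uniformly bounded on compact intervals, there is $M\geq 1$ with $\|T(s)\|\leq M$ for all $s\in[0,t_0]$. For any $D\in\mathcal L(X)$ and any $x\in\dom(A)$ we have $\|DT(s)x\|\leq \|D\|\,\|T(s)\|\,\|x\|\leq M\|D\|\,\|x\|$, hence
\[
\int^{t_0}_0 \|DT(s)x\|\,ds \leq M t_0 \|D\|\,\|x\|\qquad\forall x\in\dom(A).
\]
Moreover $\|Dx\|\leq\|D\|\,\|x\|\leq\|D\|\,\|x\|_A$ for $x\in\dom(A)$, so the restriction of $D$ to $X_1$ belongs to $\mathcal L(X_1,X)$; thus $D$ is an admissible perturbation in the sense of Theorems~\ref{thm:MV_perturbation_case} and~\ref{thm:MV_perturbation_T} whenever $Mt_0\|D\|<1$.

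Next, given $\epsilon,\tau>0$, I would apply Theorem~\ref{thm:MV_perturbation_T} with this fixed $t_0$ to obtain $0<q<1$ such that every $D\in\mathcal L(X_1,X)$ satisfying \eqref{eq:MV_perturvation} gives $\|T(t)-T_D(t)\|<\epsilon$ on $[0,\tau]$. Then I would set $d := q/(Mt_0)$. For any $D\in\mathcal L(X)$ with $\|D\|\leq d$, the displayed inequality above yields $\int^{t_0}_0\|DT(s)x\|\,ds\leq Mt_0 d\,\|x\|=q\|x\|$, so \eqref{eq:MV_perturvation} holds with constant $q<1$. Theorem~\ref{thm:MV_perturbation_case} then guarantees that $A+D$ generates a strongly continuous semigroup $T_D(t)$, and Theorem~\ref{thm:MV_perturbation_T} gives $\|T(t)-T_D(t)\|<\epsilon$ for all $t\in[0,\tau]$, which is exactly the claim.

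There is no substantive obstacle here; the proof is essentially bookkeeping. The only point that requires a little care is the order in which the constants are fixed — $t_0$ and $M$ first, then $q$ extracted from Theorem~\ref{thm:MV_perturbation_T}, and only afterwards $d$ defined in terms of $M$, $t_0$ and $q$ — so that no circular dependence between the quantities arises.
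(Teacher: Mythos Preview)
Your proposal is correct and follows essentially the same route as the paper: fix $t_0$, use the uniform bound $\|T(s)\|\leq M$ on a compact interval to show that $\|D\|\leq q/(Mt_0)$ implies \eqref{eq:MV_perturvation}, and then invoke Theorem~\ref{thm:MV_perturbation_T}. Your write-up is in fact slightly more careful than the paper's in that you explicitly verify $D\in\mathcal L(X_1,X)$ and are explicit about the order in which $t_0$, $M$, $q$, and $d$ are chosen.
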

\begin{proof}
	By the strong continuity of $T$, there exists $M\geq 1$ such that 
	$\|T(t)\| \leq M$ for every $t \in [0,\tau]$. For every $q \in (0,1)$ and 
	every $t_0 >0$,  $\|D\| \leq q/(t_0 M)$ yields \eqref{eq:MV_perturvation}.
	Therefore, the assertion  follows from
	Theorem~\ref{thm:MV_perturbation_T}. 
\end{proof}

\begin{proof}[Proof of Theorem \ref{thm:DT_case}]
	Suppose that the sampled-data system \eqref{eq:plant}
	is exponentially stable with decay rate greater than $\omega$.
	Lemma~\ref{lem:ex_po_stability} shows that $e^{\omega\tau}\Delta_\tau$ 
	is power stable.
	For the strongly continuous semigroup $T_D$ generated by $A+D$,
	we define 
	\[
	\Delta_{\tau,D} := T_D(\tau) + \int^\tau_0 T_D(s)B ds.
	\]
	Let $t_0 >0$ be given.
	By Proposition~\ref{lem:semigroup_epsilon} and Theorem~\ref{thm:MV_perturbation_T},
	there exists $0< q < 1$ such that $e^{\omega\tau}\Delta_{\tau,D}$ is power stable
	for every $D\in \mathcal{L}(X_1,X)$ satisfying \eqref{eq:MV_perturvation}.
	Using Lemma~\ref{lem:ex_po_stability} again, the perturbed sampled-data system  \eqref{eq:perturbed_plant} is also
	exponentially stable with decay rate greater than $\omega$.
\end{proof}

\section{Analytic semigroup and relatively bounded perturbations}
We next prove Theorem~\ref{thm:RB_case}.
In this theorem, we consider the following class of perturbations:
\begin{definition}
	\label{def:relatively_bounded_perturb}
	{\em
		Let $A:\dom (A) \subset X \to X$ be a linear operator on the Banach space $X$.
		A linear operator $D:\dom(D) \subset X \to X$ is called 
		a {\em relatively $A$-bounded perturbation} with constants $\alpha, \beta \geq 0$
		if 
		$\dom (A) \subset \dom (D)$ and if
		\begin{equation}
		\label{eq:relatively_bounded}
		\|Dx\| \leq \alpha \|Ax\| + \beta\|x\|\qquad \forall x \in \dom (A).
		\end{equation}
		Moreover,
		let $\mathcal{P}_{\alpha,~\!\beta}(A)$ represent the set of
		all relatively $A$-bounded perturbations with constants $\alpha, \beta \geq 0$.
	}
\end{definition}

In Definition~\ref{def:relatively_bounded_perturb}, we deal with a larger class of
perturbations than in Section~\ref{sec:general_case}, which is illustrated in
Example \ref{ex:multiplicative}  at the end of this section.

Let us assume that $A$ generates an analytic semigroup. We refer the readers to
Chapter~XVII in \cite{Hille1957},
Section~II.4.a in \cite{Engel2000}, and Section~3.7 in \cite{Arendt2001} for the definition and properties 
of analytic semigroups.
The following theorem guarantees that the perturbed operator $A+D$
also generates an analytic semigroup if $D$ is a relatively $A$-bounded perturbation
with sufficiently small constants.
\begin{theorem}[Theorem 13.7.1 of \cite{Hille1957},  Theorem~III.2.10 of \cite{Engel2000}]
	\label{thm:perturbed_analytic_semigroup}
	Let $A$ generate an analytic semigroup of angle $\delta \in (0,\pi/2]$ on a Banach space.
	There exists $\alpha,\beta > 0$ such that the sum
	$A+D$ with domain 
	$\dom(A+D) := \dom(A)$
	generates an analytic semigroup of angle at least $\delta$ for every $D \in \mathcal{P}_{\alpha,~\!\beta}(A)$.
\end{theorem}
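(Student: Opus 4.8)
The plan is to transfer to $A+D$ the sectorial resolvent estimate that characterizes generators of analytic semigroups. Fix $\delta'\in(0,\delta)$. Since $A$ generates an analytic semigroup of angle $\delta$, there are $w\in\mathbb{R}$ and $M\geq 1$ (both depending on $\delta'$) with $S:=\{\lambda\in\mathbb{C}\setminus\{0\}:|\arg(\lambda-w)|<\pi/2+\delta'\}\subset\varrho(A)$ and $\|(\lambda-w)R(\lambda,A)\|\leq M$ on $S$; replacing $A$ by $A-w$ — which leaves $D$ relatively bounded with the same $\alpha$ and a larger $\beta$, hence still admissible — I may take $w=0$, so $\|\lambda R(\lambda,A)\|\leq M$ on $S$.

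For $\lambda\in S$ I would next use the identity $\lambda I-(A+D)=\big(I-DR(\lambda,A)\big)(\lambda I-A)$ on $\dom(A)$. If $\|DR(\lambda,A)\|<1$, a Neumann series gives $\lambda\in\varrho(A+D)$ and $R(\lambda,A+D)=R(\lambda,A)\big(I-DR(\lambda,A)\big)^{-1}$, so that $\|R(\lambda,A+D)\|\leq 2\|R(\lambda,A)\|$ once $\|DR(\lambda,A)\|\leq 1/2$. The crucial step is the estimate, obtained from $\dom(A)\subset\dom(D)$, the relative bound \eqref{eq:relatively_bounded}, and $AR(\lambda,A)=\lambda R(\lambda,A)-I$,
\[
\|DR(\lambda,A)\|\ \leq\ \alpha\|AR(\lambda,A)\|+\beta\|R(\lambda,A)\|\ \leq\ \alpha(M+1)+\frac{\beta M}{|\lambda|}\qquad(\lambda\in S).
\]
Choosing $\alpha$ so small that $\alpha(M+1)\leq 1/4$, and then passing to the sub-sector $\{\lambda:|\arg(\lambda-w')|<\pi/2+\delta'\}$ with $w'>0$ large enough that $\beta M/|\lambda|\leq 1/4$ holds there, I get $\|DR(\lambda,A)\|\leq 1/2$ and hence a bound $\|R(\lambda,A+D)\|\leq C/|\lambda-w'|$ on that sub-sector. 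Since $A+D$ with domain $\dom(A)$ is densely defined and closed — a relatively $A$-bounded perturbation of the closed operator $A$ with relative bound $<1$ is closed — the resolvent characterization of generators of analytic semigroups now shows that $A+D$ generates an analytic semigroup of angle at least $\delta'$; letting $\delta'\to\delta$ yields angle at least $\delta$.

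I expect the inequality $\|DR(\lambda,A)\|<1$ to be the main obstacle. Unlike $\|R(\lambda,A)\|$, the quantity $\|AR(\lambda,A)\|$ does not tend to $0$ as $|\lambda|\to\infty$ on $S$ (for unbounded $A$ it stays bounded away from $0$), so the $\alpha$-term cannot be absorbed by shifting the contour and must genuinely be made small, with the admissible size of $\alpha$ dictated by the sectorial resolvent bound $M=M_{\delta'}$ of $A$. Controlling how $M_{\delta'}$ (and the rescaling shift $w$) depend on $\delta'$, so that a single pair $(\alpha,\beta)$ works up to the full angle $\delta$, is the delicate point; as in \cite{Hille1957,Engel2000} it can be handled by running the same estimate on the rotated generators $e^{\pm i\delta'}A$, each of which inherits the relative bound on $D$ with the constants $\alpha,\beta$ unchanged.
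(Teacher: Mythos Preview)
The paper does not prove this theorem: it is quoted verbatim as Theorem~13.7.1 of \cite{Hille1957} and Theorem~III.2.10 of \cite{Engel2000}, with no proof supplied. Your proposal is essentially the standard argument from those references---the factorization $\lambda I-(A+D)=\big(I-DR(\lambda,A)\big)(\lambda I-A)$, the Neumann series, and the bound $\|DR(\lambda,A)\|\leq\alpha(M+1)+\beta M/|\lambda|$ are exactly the ingredients of Engel--Nagel's proof, and the paper itself invokes the same identities (see the opening of the proof of Theorem~\ref{thm:analytic_T_diff}, where \eqref{eq:resolvent_AD} and the reference to ``the standard techniques as in the proofs of Lemmas~III.2.5, III.2.6 and Theorem~III.2.10 of \cite{Engel2000}'' appear). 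The concern you flag in your final paragraph---that $M=M_{\delta'}$ may blow up as $\delta'\uparrow\delta$, so a single $(\alpha,\beta)$ giving the full angle $\delta$ is not automatic---is real, and your proposed fix via the rotated generators $e^{\pm i\delta'}A$ is precisely how Engel--Nagel handle it; there is nothing to correct.
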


The next result shows that if  the original semigroup is analytic and
if a relatively $A$-bounded 
perturbation satisfies \eqref{eq:relatively_bounded} with sufficiently small constants $\alpha, \beta >0$, 
then the difference between the original and perturbed semigroups is also small.
\begin{theorem}
	\label{thm:analytic_T_diff}
	Suppose that $A$ generates an analytic semigroup $T(z)$ of angle $\delta \in (0,\pi/2]$.
	For every $\epsilon,r>0$ and every $0<\delta_1<\delta$, there 
	exist constants $\alpha,\beta > 0$ such that 
	for every $D \in \mathcal{P}_{\alpha,~\!\beta}(A)$, the perturbed semigroup $T_D(z)$ generated by 
	$A+D$ satisfies
	\begin{equation}
	\label{eq:analytic_T_diff}
	\|T(z) - T_D(z)\| < \epsilon\qquad 
	\forall z \in \Sigma_{\delta_1} \cap \mathbb{D}_r.
	\end{equation}
\end{theorem}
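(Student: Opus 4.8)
The plan is to represent both $T(z)$ and $T_D(z)$ by Cauchy integrals of their resolvents and to bound the difference of the integrands by means of the second resolvent identity; the decisive point is to integrate over a contour $\Gamma_z$ whose ``scale'' is $1/|z|$, since this is exactly what makes the resulting bound uniform down to $z=0$ and thus turns the classical statement (Theorem~13.7.3 of \cite{Hille1957}, valid on compact subsets of the open sector) into the sharper estimate~\eqref{eq:analytic_T_diff}. First I would normalize: replacing $A$ by $A-\omega_0 I$ for a large enough $\omega_0\ge 0$ we may assume that $A$ generates a \emph{bounded} analytic semigroup of angle $\delta$ with $0\in\varrho(A)$, because this preserves the angle, keeps $D$ relatively $A$-bounded (only $\beta$ is increased, to $\alpha\omega_0+\beta$), multiplies $T_D(z)$ by $e^{-\omega_0 z}$, and $|e^{\pm\omega_0 z}|\le e^{\omega_0 r}$ on $\mathbb{D}_r$; hence it suffices to prove the normalized statement with $\epsilon$ replaced by $\epsilon e^{-\omega_0 r}$. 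Fix $\delta''$ with $\delta_1<\delta''<\delta$ and set $\overline\Sigma:=\{\lambda\in\mathbb{C}\setminus\{0\}:|\arg\lambda|\le\pi/2+\delta''\}\cup\{0\}$; then there is $M\ge1$ with $\|\lambda R(\lambda,A)\|\le M$ on $\overline\Sigma$, and the resolvent identity $R(\lambda,A)=R(0,A)-\lambda R(0,A)R(\lambda,A)$ gives $\|R(\lambda,A)\|\le C_R:=(1+M)\,\|A^{-1}\|$ on $\overline\Sigma$ as well.

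Next I would control the perturbed resolvent uniformly in $D$. Since $AR(\lambda,A)=\lambda R(\lambda,A)-I$ has norm at most $M+1$ on $\overline\Sigma$, relative boundedness gives $\|DR(\lambda,A)\|\le\alpha(M+1)+\beta C_R=:\eta$ on $\overline\Sigma$, and $\eta\downarrow0$ as $\alpha,\beta\downarrow0$. Choosing $\alpha,\beta$ small enough that $\eta\le1/2$ (and small enough for Theorem~\ref{thm:perturbed_analytic_semigroup} to apply), the factorization $\lambda I-(A+D)=\big(I-DR(\lambda,A)\big)(\lambda I-A)$ on $\dom(A)$ together with a Neumann series shows $\overline\Sigma\subset\varrho(A+D)$, $R(\lambda,A+D)=R(\lambda,A)\big(I-DR(\lambda,A)\big)^{-1}$, hence $\|R(\lambda,A+D)\|\le2M/|\lambda|$, and therefore
\[
\|R(\lambda,A)-R(\lambda,A+D)\|=\big\|R(\lambda,A+D)\,DR(\lambda,A)\big\|\le\frac{2M\eta}{|\lambda|}\qquad(\lambda\in\overline\Sigma).
\]
In particular $A+D$ again generates a bounded analytic semigroup of angle at least $\delta''>\delta_1$ with $0\in\varrho(A+D)$, so for every $z\in\Sigma_{\delta_1}$ both $T(z)$ and $T_D(z)$ admit the representation $\frac{1}{2\pi i}\int_{\Gamma_z}e^{\lambda z}R(\lambda,\cdot)\,d\lambda$, where $\Gamma_z$ is the positively oriented contour consisting of the two rays $\{se^{\pm i\theta}:s\ge1/|z|\}$ and the arc $\{|z|^{-1}e^{i\psi}:|\psi|\le\theta\}$ for a fixed $\theta\in(\pi/2+\delta_1,\pi/2+\delta'')$; note $\Gamma_z\subset\overline\Sigma$.

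Subtracting the two representations and inserting the bound above yields
\[
\|T(z)-T_D(z)\|\le\frac{M\eta}{\pi}\int_{\Gamma_z}\frac{|e^{\lambda z}|}{|\lambda|}\,|d\lambda|,
\]
and the key observation is that this last integral is bounded by a constant independent of $z\in\Sigma_{\delta_1}$. On the rays one has $|e^{\lambda z}|\le e^{-c_0|\lambda||z|}$ with $c_0:=\min_{|\psi|\le\delta_1}\big(-\cos(\theta+\psi)\big)>0$ (because $\theta\pm\psi$ stays in $(\pi/2,3\pi/2)$), so the substitution $v=|\lambda||z|$ converts the ray contribution into $2\int_1^\infty e^{-c_0v}v^{-1}\,dv$, which is finite and $z$-free; on the arc $|e^{\lambda z}|\le e$ and $|\lambda|^{-1}=|z|$, so that contribution is at most $2\theta e$. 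Hence $\|T(z)-T_D(z)\|\le C\eta$ for all $z\in\Sigma_{\delta_1}$ with $C$ independent of $D$ and $z$; shrinking $\alpha,\beta$ so that $C\eta<\epsilon e^{-\omega_0 r}$ and undoing the normalization completes the argument. I expect the main obstacle to be precisely this uniformity near the origin: with a contour of fixed shape the estimate is only of order $\eta\log(1/|z|)$, which is useless as $z\to0$, and it is the $|z|$-dependent scaling of $\Gamma_z$ (arc radius $\sim1/|z|$) that removes the logarithm.
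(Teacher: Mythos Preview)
Your proof is correct and follows essentially the same route as the paper's own argument: both represent $T(z)$ and $T_D(z)$ via Cauchy integrals over a $z$-dependent contour (two rays plus an arc of radius $1/|z|$), bound the resolvent difference by $\mathrm{const}\cdot\nu/|\lambda|$ using the factorization $\lambda I-A-D=(I-DR(\lambda,A))(\lambda I-A)$, and observe that the resulting contour integral is bounded independently of $z$. The only cosmetic difference is that you normalize to a bounded analytic semigroup with $0\in\varrho(A)$ at the outset (obtaining a uniform bound $\|R(\lambda,A)\|\le C_R$ on the full sector), whereas the paper postpones the shift to the end and instead restricts the resolvent estimates to $|\lambda|\ge 1/r$, which suffices because the contour $\gamma_z$ never enters $\mathbb{D}_{1/r}$ when $|z|<r$.
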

\begin{proof}
	First we assume that $T(z)$ is bounded, i.e.,
	$\|T(z)\|$ is bounded in $\Sigma_{\delta'}$ for every $0 < \delta' < \delta$.
	For every $\lambda \in \varrho(A)$ and
	every relatively $A$-bounded perturbation $D$,
	we obtain
	\[
	\lambda-A-D = \big[I - DR(\lambda,A)\big] (\lambda I- A),
	\]
	and hence
	if $\|DR(\lambda,A)\| < 1$, then $\lambda \in \varrho(A+D)$ and
	\begin{equation}
	\label{eq:resolvent_AD}
	R(\lambda,A+D) = R(\lambda,A)\big[I - DR(\lambda,A)\big]^{-1}.
	\end{equation}
	
	Choose $\nu \in (0,1)$, 
	$\delta_1 \in (0,\delta)$, $\delta_2 \in (\delta_1,\delta)$, and $r >0$ arbitrarily.
	One can show that 
	there exist $\alpha,\beta > 0$ such that 
	\begin{equation}
	\label{eq:DR_epsilon}
	\|DR(\lambda,A)\| \leq \nu \qquad 
	\forall D \in \mathcal{P}_{\alpha,~\!\beta}(A),~
	\forall \lambda \in 
	\overline{\Sigma}_{\pi/2+\delta_2} \cap
	\overline{\mathbb{E}}_{1/r} =:\Sigma,
	\end{equation}
	using the standard techniques as in
	the proofs of Lemmas~III.2.5, III.2.6 and Theorem~III.2.10 of \cite{Engel2000}.
	Let $\alpha,\beta >0$ satisfy 
	\eqref{eq:DR_epsilon}. 
	By Theorem~II.4.6 in \cite{Engel2000},
	there exists $C  = C(\delta_2)>0$ such that
	for every $\lambda \in \overline{\Sigma}_{\pi/2+\delta_2} \setminus \{0\}$,
	we obtain $\lambda \in \rho(A)$ and 
	\begin{equation}
	\label{eq:resol_analytic_bound}
	\|R(\lambda,A)\| \leq \frac{C}{|\lambda|}.
	\end{equation}
	Therefore, 
	$\Sigma \subset \varrho(A+D) $ 
	for every 
	$D \in \mathcal{P}_{\alpha,~\!\beta}(A)$, and 
	\eqref{eq:resolvent_AD} yield
	\begin{equation}
	\label{eq:resolvent_bounded}
	\|R(\lambda,A+D)\| \leq \frac{C}{(1-\nu)|\lambda|}\qquad
	\forall D \in \mathcal{P}_{\alpha,~\!\beta}(A),~\forall \lambda \in \Sigma
	\end{equation}
	This inequality implies that 
	for every $D \in \mathcal{P}_{\alpha,~\!\beta}(A)$,
	$A+D$ generates an analytic semigroup of angle at least $\delta$; see, e.g., 
	Exercise~III.2.18 (3) of \cite{Engel2000}. Moreover, 
	since
	\begin{align*}
	R(\lambda,A) - R(\lambda,A+D) &= 
	R(\lambda,A) \big(
	I - \big[I - DR(\lambda,A)\big]^{-1}
	\big) \\
	&=
	-R(\lambda,A) DR(\lambda,A) \big[I - DR(\lambda,A)\big]^{-1},
	\end{align*}
	it follows 
	from \eqref{eq:resolvent_AD} and 
	\eqref{eq:resol_analytic_bound}
	that 
	\begin{equation}
	\label{eq:resol_diff}
	\|
	R(\lambda,A) -R(\lambda,A+D) 
	\| \leq
	\frac{C\nu }{(1-\nu )|\lambda|}\qquad
	\forall D \in \mathcal{P}_{\alpha,~\!\beta}(A),~\forall \lambda \in \Sigma
	\end{equation}
	
	Let $z \in \Sigma_{\delta_1} \cap \mathbb{D}_{r}$ and 
	$D \in \mathcal{P}_{\alpha,~\!\beta}(A)$ be given, where $\alpha,\beta > 0$ satisfy \eqref{eq:DR_epsilon}.
	Choose any piecewise smooth curve $\gamma$ in $\Sigma_{\pi/2 + \delta}$
	going from $\infty e^{-i (\pi/2 +\delta')}$ to $\infty e^{i (\pi/2 +\delta')}$ for some
	$\delta' \in (|\arg z|, \delta)$.
	Since $A+D$ generates an analytic semigroup $T_D$ of angle $\delta$ by
	the argument above,
	there exists $w \geq 0$ such that
	$A+D-wI$ generates a bounded analytic semigroup $e^{-wz}T_D(z)$ of angle $\delta$. Hence
	the integral representation of $T_D$ is given by (see, e.g., Definition~II.4.2 of \cite{Engel2000})
	\begin{equation}
	\label{eq:TD_IntRep}
	T_D(z) = \frac{1}{2\pi i}
	\int_{\gamma_{w}} e^{\mu z} R(\mu, A+D) d\mu,
	\end{equation}
	where 
	$\gamma_{w}$ is a piecewise smooth curve similar to $\gamma$ but shifted to the right by ${w}$, that is,
	$
	\gamma_{w}:= \{\mu\in \mathbb{C}: \mu = {w} + \mu_1,~ \mu_1 \in \gamma\}.
	$
	Consider an integral path $\gamma_z \subset \Sigma$ that consists of
	the three parts
	\begin{align*}
	&\gamma_{z,1} : 
	\{
	-\rho e^{-i(\pi/2+(\delta_1+\delta_2)/2)} :-\infty \leq \rho \leq -1/|z|
	\}, \\
	&\gamma_{z,2} : 
	\{
	e^{i \alpha }/|z| : - (\pi/2+(\delta_1+\delta_2)/2) \leq \alpha \leq \pi/2+(\delta_1+\delta_2)/2
	\},  \\
	&\gamma_{z,3} : 
	\{
	-\rho e^{i(\pi/2+(\delta_1+\delta_2)/2)} :1/|z| \leq \rho \leq \infty
	\}.
	\end{align*}
	Figure~\ref{fig:integral_path} illustrates the integral path $\gamma_z$.
	
\begin{figure}[tb]
	\centering
	\includegraphics[width = 5cm]{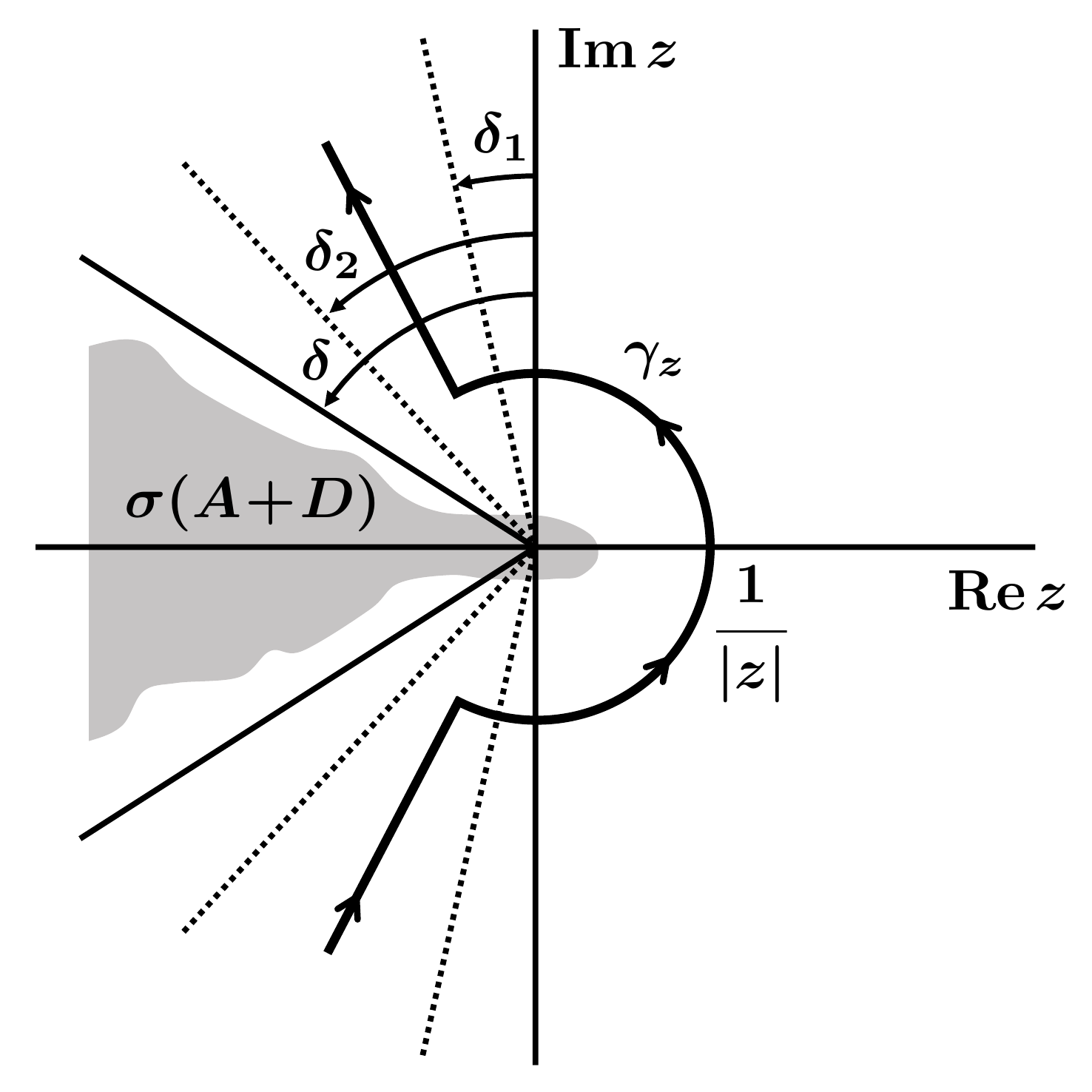}
	\caption{Integral path $\gamma_z$.}
	\label{fig:integral_path}
\end{figure}

	Since $\Sigma \subset \varrho(A+D)$, it follows from
	Cauchy's integral theorem and \eqref{eq:TD_IntRep}
	that
	\[
	T_D(z) = \frac{1}{2\pi i}
	\int_{\gamma_z} e^{\mu z} R(\mu, A+D) d\mu.
	\]
	The integral representation of $T$ is also given by
	\[
	T(z) = \frac{1}{2\pi i}
	\int_{\gamma_z} e^{\mu z} R(\mu, A) d\mu,
	\]
	and hence
	\[
	T(z) - T_D(z) 
	= 
	\frac{1}{2\pi i}
	\int_{\gamma_z} e^{\mu z} \big[R(\mu, A) - R(\mu, A+D) \big]d\mu.
	\]
	Hence \eqref{eq:resol_diff} yields
	\[
	\|T(z) - T_D(z)\|
	\leq
	\frac{C\nu }{2\pi(1-\nu) }
	\int_{\gamma_z}  \frac{e^{\re (\mu z)}}{|\mu|} d\mu\qquad
	\forall D \in \mathcal{P}_{\alpha,~\!\beta}(A).
	\]
	Since a calculation similar to that in the proof of
	Proposition~II.4.3 of \cite{Engel2000} shows that
	\[
	\frac{1}{2\pi}
	\int_{\gamma_z} \frac{e^{\re (\mu z)}}{|\mu|} d\mu
	\leq \frac{1}{\pi}
	\int^\infty_1 \frac{e^{-\rho \sin(\delta_2 - \delta_1)/2}}{\rho} d\rho + e
	=: L(\delta_1,\delta_2) < \infty,
	\]
	it follows that 
	\[
	\|T(z) - T_D(z)\|
	\leq
	\frac{CL(\delta_1,\delta_2)\nu }{1-\nu }\qquad
	\forall D \in \mathcal{P}_{\alpha,~\!\beta}(A).
	\]
	For every $\epsilon >0$, if we choose $\nu  \in (0,1)$ satisfying
	\[
	\nu  < \frac{\epsilon}{CL(\delta_1,\delta_2) + \epsilon},
	\]
	then \eqref{eq:analytic_T_diff} holds.
	
	To treat the general case, we take $w_0 \geq 0$ such that $A-w_0I$ generates
	a bounded analytic semigroup of angle $\delta$. Since
	\[
	\|Dx\| \leq \alpha \|Ax\| + \beta\|x\| \leq 
	\alpha \|(A-w_0I)x\| + (\alpha w_0 + \beta) \|x\|\qquad \forall x \in \dom (A),
	\]
	the operator
	$D$ is also relatively $(A-w_0I)$-bounded and the coefficients $\alpha$ and $\alpha w_0 + \beta$
	can be arbitrarily small for suitable $\alpha, \beta$.
	Take $z \in \Sigma_{\delta_1} \cap \mathbb{D}_{r}$.
	Replacing $A$ by $A-w_0I$ in the argument above, we obtain
	\begin{align*}
	\|e^{-w_0z}T(z) - e^{-w_0z}T_D(z)\|
	&\leq \frac{\nu L(\delta_1,\delta_2)}{2\pi(1-\nu )}\qquad
	\forall D \in \mathcal{P}_{\alpha,~\!\beta}(A).
	\end{align*}
	and hence
	\[
	\|T(z) - T_D(z)\| \leq \frac{e^{w_0r} C L(\delta_1,\delta_2) \nu }{1-\nu }\qquad
	\forall D \in \mathcal{P}_{\alpha,~\!\beta}(A).
	\]
	Thus,  for every $\epsilon >0$, if $\nu  \in (0,1)$ satisfies
	\[
	\nu  < \frac{\epsilon}{e^{w_0r}CL(\delta_1,\delta_2) + \epsilon},
	\]
	then we obtain \eqref{eq:analytic_T_diff}. This completes the proof.
\end{proof}

\begin{proof}[Proof of Theorem~\ref{thm:RB_case}]
	If we use Theorem~\ref{thm:analytic_T_diff} instead of Theorem~\ref{thm:MV_perturbation_T},
	then we obtain
	the proof of Theorem~\ref{thm:RB_case}, by proceeding as in the proof of Theorem~\ref{thm:DT_case}
	above. We therefore omit the detail. 
\end{proof}

We conclude this section with the comparison of the perturbation classes in 
Theorems \ref{thm:DT_case} and \ref{thm:RB_case}.
Let $A$ be a generator on a Banach space $X$.
If the perturbation $D$ is small in the sense of \eqref{eq:MV_perturvation}, then
$D$ is small in the sense of \eqref{eq:relatively_bounded} as well. More precisely,
for every $\alpha,\beta >0$, $t_0>0$, and $D \in \mathcal{L}(X_1,X)$, there exists $q>0$ such that
\eqref{eq:MV_perturvation} implies
\eqref{eq:relatively_bounded}; see, e.g., 
Exercise~III.2.18 (2) of \cite{Engel2000}. However, the converse is false as shown below.

\begin{example}
	\label{ex:multiplicative}
	{\em
		Here we provide a perturbation that 
		is small in the sense of \eqref{eq:relatively_bounded} but
		large in the sense of \eqref{eq:MV_perturvation}.
		Since the semigroup we consider in this example is not analytic,
		Theorem~\ref{thm:analytic_T_diff} is not applicable.
		We will therefore see that even if the perturbation is sufficiently small in the sense of 
		\eqref{eq:relatively_bounded}, 
		the difference between the original and perturbed semigroups
		is not small.

		Consider the Banach space \[X := \big\{
		(x_n)_{n \in \mathbb{N}}: x_n \in \mathbb{C}~~\text{for every $n \in \mathbb{N}$} \text{~~and~~}
		\lim_{n \to \infty} x_n = 0
		\big\}\]
		with norm $\|(x_n)_{n \in \mathbb{N}}\| := \sup_{n \in \mathbb{N}}|x_n|$.
		On the Banach space $X$, we 
		take the strongly continuous (semi)group $T(t)$ given by
		\[
		T(t)(x_n)_{n \in \mathbb{N}} = \big(e^{(-1+in)t}x_n\big)_{n \in \mathbb{N}}\qquad \forall t \geq 0.
		\]
		This semigroup $T(t)$ is generated by
		\[
		A(x_n)_{n \in \mathbb{N}} := \big((-1+in)x_n\big)_{n \in \mathbb{N}} 
		\]
		with domain
		\[
		\dom(A) := \big\{
		(x_n)_{n \in \mathbb{N}} \in X: \big((-1+in)x_n\big)_{n \in \mathbb{N}}  \in X
		\big\};
		\]
		see, e.g., 
		Lemma in Section II.2.9 of \cite{Engel2000}.
		The semigroup $T(t)$ is not analytic.
		
		Consider the perturbation operator $D_m:X \to X$, $m \in \mathbb{N}$, defined by
		\[
		D_m(x_n)_{n \in \mathbb{N}} := 
		(\underbrace{0,\dots,0}_{m-1},\sqrt{m}x_m,0,\dots).
		\]
		The operator $D_m$ is bounded for each $m \in \mathbb{N}$, but
		the norm of $D_m$ diverges as $m \to \infty$.
		
		\noindent
		{\sf (a)} First we investigate the conditions \eqref{eq:MV_perturvation} and 
		\eqref{eq:relatively_bounded} for $D_m$.
		If we take 
		\begin{equation}
		\label{eq:only_m}
		(z_n)_{n \in \mathbb{N}}  = (\underbrace{0,\dots,0}_{m-1},z_m,0,\dots) \in \dom(A),
		\end{equation}
		then every $t_0 >0$,
		\begin{align*}
		\int^{t_0}_0 \big\| D_mT(t)(z_n)_{n \in \mathbb{N}} \big\| dt 
		=
		\int^{t_0}_0 \big|\! \sqrt{m}e^{(-1+im)t} z_m \big| dt =
		\sqrt{m}\!~(1-e^{-t_0}) \|(z_n)_{n \in \mathbb{N}} \|.
		\end{align*}
		On the other hand, for every $(x_n)_{n \in \mathbb{N}} \in \dom(A)$,
		\[
		\big\|A (x_n)_{n \in \mathbb{N}}  \big\| \geq 
		|-1+im| \cdot |x_m|\geq m|x_m|,
		\]
		and hence
		\[
		\big\|D_m(x_n)_{n \in \mathbb{N}} \big\|
		= \sqrt{m} ~\! |x_m| \leq 
		\frac{\big\|A (x_n)_{n \in \mathbb{N}}  \big\|}{\sqrt{m} }  .
		\]
		Therefore, as $m \to \infty$, the perturbation $D_m$
		becomes larger in the sense of \eqref{eq:MV_perturvation} but
		smaller in the sense of \eqref{eq:relatively_bounded}.
		
		\noindent
		{\sf (b)} Let us next examine the difference  between
		the semigroup and the perturbed semigroup.
		Since $D_m$ is bounded,
		it follows that each $A+D_m$ is the generator of the strongly continuous semigroup $T_m(t)$, which
		is given by
		\begin{align*}
		T_m(t) (x_n)_{n \in \mathbb{N}}
		&=
		(
		e^{(-1+i)t}x_1,\dots,e^{(-1+i(m-1))t} x_{m-1}, \\
		&\qquad \qquad e^{(-1+im)t+\sqrt{m}t}x_m, e^{(-1+i(m+1))t} x_{m+1},\dots
		)
		\end{align*}
		for every $t\geq 0$.
		Hence, $(z_n)_{n \in \mathbb{N}}$ in \eqref{eq:only_m} satisfies
		\begin{align*}
		\big\|
		T(t)(z_n)_{n \in \mathbb{N}} - T_m(t)(z_n)_{n \in \mathbb{N}}
		\big\|
		&=
		\big|e^{(-1+im)t}z_m - e^{(-1+im)t+\sqrt{m}t}z_m \big|  \\
		&= e^{-t}(e^{\sqrt{m}t} - 1)\big\|(z_n)_{n \in \mathbb{N}}\big\|\qquad \forall t\geq 0,
		\end{align*}
		which implies that $\|T(t) - T_m(t)\| \geq e^{-t}(e^{\sqrt{m}t} - 1)$ for every $m\in \mathbb{N}$ and
		every $t \geq 0$.
		From this example, we find that 
		even if the perturbation is sufficiently small in the sense of \eqref{eq:relatively_bounded},
		there exists a strongly continuous semigroup such that the difference (in the uniform operator topology) between
		the original and perturbed semigroups is not small.
		Moreover, a similar calculation shows that $\|T_m(t)\| \geq e^{(\!\sqrt{m} - 1)t}$ for every $m \in \mathbb{N}$ 
		and $t\geq 0$. 
		Since $\|T(t)\| = e^{-t}$, the open-loop system $\dot x = Ax$ is exponentially stable.
		However, the perturbed open-loop system $\dot x = (A+D_m)x$ is not exponentially stable for any
		$m \in \mathbb{N}$, although
		$D_m$ becomes sufficiently small in the sense of \eqref{eq:relatively_bounded} as $m \to \infty$.
		
	}
\end{example}

%
%
%
%
%
\section{Examples}
In this section, we illustrate
the proposed robustness analysis with
delayed perturbations and low-order differential perturbations.
First, we apply Theorem~\ref{thm:DT_case} to delayed perturbations, 
employing the techniques developed in Chapter~3 in \cite{Batkai2005}.
To analyze the robust stability of diffusion systems, we exploit
Theorem~\ref{thm:RB_case} together with 
the results obtained in Example III.2.12 of \cite{Engel2000} and 
Example 3.7.34 of \cite{Arendt2001}.

Before presenting examples,
we recall the definitions of vector-valued function spaces.
For an interval $J \subset \mathbb{R}$, we define the vector-valued Lebesgue space $L^2(J,X)$
and
Sobolev space $H^1(J,X)$ by
\begin{align*}
L^2(J,X) &:= \left\{
f:J \to X: \text{$f$ is measurable and~} \int_J\|f(s)\|^2 dx < \infty  
\right\}, \\
H^1(J,X) &:= \left\{
f \in L^2(J,X): 
\exists s_0\in J,~g \in L^2(J,X) \text{~s.t.~}
f(s) = f(s_0) + \int^s_{s_0} g(s)ds
\right\}.
\end{align*}
The spaces  $L^2(J,X)$ and $H^1(J,X)$ are endowed, respectively, with
the norms
\[
\|f\|_{L^2} :=
\sqrt{\int_J\|f(s)\|^2 dx},\quad
\|f\|_{H^1} := \|f\|_{L^2} + \|g\|_{L^2},
\]
where $g$ satisfies the condition in the definition of $H^1(J,X)$.
We set $L^2(J) := L^2(J,\mathbb{C})$ and $H^1(J) := H^1(J,\mathbb{C})$.
Similarly, 
$L^p(J)$, $p \in [1,\infty]$, and
$H^2(J)$ are denoted by the standard 
scalar-valued Lebesgue space and
second-order Sobolev space, respectively.
\begin{example}[Delayed perturbations]
	{\em
		For a Banach space $X$,
		define a bounded linear operator $\Phi:H^1\big( 
		[-1,0],X
		\big) \to X$ in the form of a Riemann-Stieltjes integral:
		\begin{equation}
		\label{eq:Phi_def}
		\Phi(f) := \int^0_{-1}d\eta(s)f(s) ,
		\end{equation}
		where $\eta:[-1,0] \to \mathcal{L}(X)$ is of bounded variation.
		We refer the readers to Chapter~2 of \cite{Dudley2011} for the Riemann-Stieltjes integral of a vector-valued function.
		For $k\in \{1,\dots,n\}$,
		let
		$D_k \in \mathcal{L}(X)$,
		$h_k \in [0,1]$, and
		$\mathds{1}_{[-h_k,0]}$ be the characteristic function of $[-h_k,0]$.
		If we set $\eta:= \sum_{k=1}^nD_k \mathds{1}_{[-h_k,0]}$, then $\Phi$ is the discrete delay operator given by
		\[
		\Phi(f) = \sum_{k=1}^n D_kf(-h_k).
		\]

		For a function $z: [-1,\infty) \to X$ and $t\geq0$, we define $z_t:[-1,0] \to X$ by
		$
		z_t(\sigma) := z(t+\sigma).
		$
		Let 
		$A$ be the generator of a strongly continuous semigroup $T(t)$ on $X$. 
		For a Banach space $U$, let
		$B_0 \in \mathcal{L}(U,X)$ and
		$F \in \mathcal{L}(X,U)$.
		Consider the following sampled-data system with delayed perturbation:
		\begin{subequations}
			\label{eq:delayed_equation}
			\begin{align}
			&\dot z(t) = Az(t) + \Phi z_t + B_0u(t) \quad t\geq 0;\qquad 
			z(0) = \zeta,~z_0 = f, \\
			&u(k\tau + t) = F z(k\tau)\qquad t \in[0, \tau),~k \in \mathbb{Z}_+.
			\end{align}
		\end{subequations}
		where $\zeta \in X$, $f \in L^2\big([-1,0],X\big)$, and $\tau >0$.
		Define $\mathcal{X} := X \times L^2\big([-1,0],X\big)$ with norm
		\[
		\left\|~\!
		\begin{bmatrix}
		z \\ f
		\end{bmatrix}
		~\!
		\right\|_{\mathcal{X}} := \|z\|_X + \|f\|_{L^2},\quad
		\text{where $\|\cdot\|_X$ is the norm in $X$}.
		\]
		Set
		\[
		\mathcal{A} := 
		\begin{bmatrix}
		A & 0 \\
		0 & d/d\theta
		\end{bmatrix}\quad
		\text{with~}
		\dom(\mathcal{A}) := 
		\left\{
		\begin{bmatrix}
		\zeta \\ f
		\end{bmatrix}
		\in \dom (A) \times H^1\big([-1,0],X\big):f(0) = \zeta
		\right\},
		\]
		where  $d/d\theta$ denotes the distributional derivative.
		We denote by $\|\cdot\|_\mathcal{A}$ 
		the graph norm for $\mathcal{A}$, i.e., 
		$\|x\|_{\mathcal{A}} := \|x\|_{\mathcal{X}} + \|\mathcal{A} x\|_{\mathcal{X}}$ for 
		$x \in \dom(\mathcal{A})$,
		and set $\mathcal{X}_1 := \big(\hspace{-1pt}\dom (\mathcal{A}), \|\cdot\|_\mathcal{A}\big)$.
		Define 
		\begin{align*}
		\mathcal{D} &:=
		\begin{bmatrix}
		0 & \Phi \\
		0 & 0
		\end{bmatrix} \in \mathcal{L}(\mathcal{X}_1,\mathcal{X}),\quad
		\mathcal{B} := 
		\begin{bmatrix}
		B \\ 0
		\end{bmatrix} \in \mathcal{L}(U,\mathcal{X}),\quad
		\mathcal{F} := 
		\begin{bmatrix}
		F & 0
		\end{bmatrix} \in \mathcal{L}(\mathcal{X},U).
		\end{align*}
		
		Setting 
		\[
		x(t) := \begin{bmatrix}
		z(t) \\ z_t
		\end{bmatrix},
		\]
		we find that 
		\eqref{eq:delayed_equation} can be written in the abstract form
		\begin{align*}
		&\dot x(t) = (\mathcal{A} + \mathcal{D}) x+ 
		\mathcal{B} u(t)\quad t \geq 0;\qquad
		x(0) = \begin{bmatrix}
		\zeta \\ f
		\end{bmatrix}\\
		&u(k\tau + t) = \mathcal{F} x(k\tau)\qquad t \in[0, \tau),~k \in \mathbb{Z}_+.
		\end{align*}
		see Section~3.1 of \cite{Batkai2005} for details. Moreover,
		$\mathcal{A}$ generates a strongly continuous semigroup $\mathcal{T}(t)$ by
		Theorem~3.2.5 of \cite{Batkai2005}.
		For every $t_0 \in (0,1)$,
		\[
		\int^{t_0}_0 \left\|
		\mathcal{D} \mathcal{T}(s)
		x
		\right\|_{\mathcal{X}} ds \leq
		t_0^{1/2}\cdot  \sup_{t \in [0,1]} \|T(t)\| \cdot v(\eta;[-1,0]) \cdot \|x\|_{\mathcal{X}}  \qquad \forall x \in \dom(\mathcal{A})
		\]
		by (3.41) of \cite{Batkai2005},
		where $v(\eta;[-1,0])$ is the total variation of $\eta$ on $[-1,0]$.
		By
		Theorem~\ref{thm:DT_case},
		if the total variation $v(\eta;[-1,0])$ is sufficiently small and
		if the nominal sampled-data system is exponentially stable  with decay rate greater than
		$\omega$,
		then 
		the perturbed sampled-data system 
		is also exponentially stable with decay rate greater than
		$\omega$.
	}
\end{example}
\begin{example}[Perturbations by general low-order differential operators]
	{\em
		Consider the second-order derivative operator $A$ on $X := L^2[0,1]$
		\[
		Af := \frac{d^2f}{d\xi^2}\quad
		\text{with~} \dom(A) := \big\{
		f \in H^2[0,1]: f(0) = f(1)
		\big\}.
		\]
		This operator generates an analytic semigroup on $X$, and
		the first-order derivative $\Gamma_0 := d/d\xi$ with maximal domain $H^1[0,1]$ is relatively 
		$A$-bounded, i.e.,
		there exist constants $\alpha_0,\beta_0 \geq0$ such that
		\[
		\|\Gamma_0f\|_{L^2} \leq \alpha_0 \|Af\|_{L^2} + \beta_0 \|f\|_{L^2}\qquad \forall f \in \dom (A);
		\]
		see Example~III.2.12 (i) of \cite{Engel2000}.
		Take $D \in \mathcal{L}\big(H^1[0,1], L^2[0,1]\big)$. Using Poincare's inequality
		(see, e.g., Sec.~1.7.2  in \cite{Dym1972}),
		\[
		\|f\|_{L^2} \leq \frac{1}{\pi} \|\Gamma_0 f\|_{L^2}\qquad \forall f \in \dom (A),
		\]
		we have that
		\begin{align*}
		\|Df\|_{L^2} &\leq \|D\|\cdot \|f\|_{H^1} 
		\leq \frac{1+\pi}{\pi} \|D\| \cdot \left\|\Gamma_0f \right\|_{L^2} \\
		&\leq \frac{1+\pi}{\pi}\|D\| ~\! \big(
		\alpha_0 \|Ax\|_{L^2} + \beta_0 \|x\|_{L^2}
		\big)\qquad \forall f \in \dom (A),
		\end{align*}
		By Theorem~\ref{thm:RB_case},
		if $\|D\|$ is sufficiently small and 
		if the nominal sampled-data system with generator $A$ is exponentially stable with decay rate greater than
		$\omega$, then
		the sampled-data system perturbed by $D$ is also exponential stability  with decay rate greater than
		$\omega$.
	}
\end{example}

\begin{example}[Perturbations by low-order differential operators and multiplication operators]
	{\em 
		We consider the $n$-dimensional diffusion semigroup on $X:=L^1(\mathbb{R}^n)$
		given by
		\[
		(T(t)f)(x) := (4\pi t)^{-n/2} 
		\int_{\mathbb{R}^n} e^{-|x-r|^2/(4t)} f(r)dr
		\]
		for every $t >0$ and every $x \in \mathbb{R}^n$.
		Let us denote by $A$ the generator of $T(t)$.
		By Proposition in Paragraph~II.2.13 of \cite{Engel2000},
		the generator $A$ 
		is the closure of the Laplacian defined on the Schwartz space, i.e., 
		the space of rapidly decreasing, infinitely differentiable functions.
		For $j \in \{1,\dots,n \}$, let $U_j$ be the strongly continuous group on $X$ defined by
		\[
		(U_j(t) f)(x) := f(x_1,\dots,x_{j-1}, x_j+t,\dots,x_n)
		\]
		for every $t \in \mathbb{R}$ and every $x \in \mathbb{R}^n$. For $j \in \{1,\dots,n \}$,
		and 
		the generator of $U_j$ be denoted by $\Gamma_j$, which
		is a first-order differential operator.
		Let $d_j \in L^{\infty}(\mathbb{R}^n)$ and define 
		\[
		(D_1 f)(x) := \sum_{j=1}^n d_j(x) (\Gamma_jf)(x)
		\] 
		for every $x \in \mathbb{R}^n$ with domain \[\dom (D_1) := \big\{
		f \in L^1(\mathbb{R}^n): 
		\text{$f \in \dom(\Gamma_j)$ for $j \in \{1,\dots,n\}$ with $d_j \not \equiv 0$}
		\big\}.
		\]
		Let $p > \max\{1,n/2\}$ and $q \in L^p(\mathbb{R}^n)$, and define
		\[
		(D_2f)(x) := q(x) f(x)
		\]
		for every $x \in \mathbb{R}^n$ with domain  $\dom (D_2) := 
		\big\{
		f \in L^1(\mathbb{R}^n): qf \in L^1(\mathbb{R}^n)
		\big\}$.
		It is shown in Example 3.7.34 of \cite{Arendt2001} that 
		$D_1$ is relatively $A$-bounded and satisfies
		\begin{equation}
		\label{eq:D1_bound}
		\|D_1f\|_{L^1} \leq
		\sum_{j=1}^n \|d_j\|_{L^{\infty}}\big(
		\|Af\|_{L^1}+ \|f\|_{L^1}
		\big)\qquad \forall f \in \dom (A).
		\end{equation}
		As shown in Example III.2.12 (ii) of \cite{Engel2000}, $D_2$ is also relatively $A$-bounded and satisfies
		\begin{equation}
		\label{eq:D2_bound}
		\|D_2f\|_{L^1} \leq
		c \|q\|_{L^p}
		\big(
		\|Af\|_{L^1} + \|f\|_{L^1}
		\big) \qquad \forall f \in \dom(A),
		\end{equation}
		where the constant $c>0$ depends only on $p$ and $n$.
		Together with \eqref{eq:D1_bound} and \eqref{eq:D2_bound}, Theorem~\ref{thm:RB_case} shows that
		if $\|d_1\|_{L^{\infty}},\dots, \|d_n\|_{L^{\infty}}$ and $\|q\|_{L^p}$ are sufficiently small and
		if the nominal sampled-data system with generator $A$ is exponentially stable with decay rate greater than
		$\omega$, then
		the sampled-data system perturbed by $D:= D_1 + D_2$ is also exponentially stable with decay rate greater than
		$\omega$.
	}
\end{example}

\section{Concluding remarks}
We have analyzed the exponential stability of infinite-dimensional  sampled-data systems
under unbounded perturbations.
First we have showed that exponential stability is preserved 
if the perturbed semigroup is sufficiently close to the original semigroup in the uniform
operator topology.
Then we have presented two results on the continuity of strongly continuous semigroups
with respect to their generators.
The first result is based on the perturbation theorem of Miyadera-Voigt.
The second result is obtained from the results on analytic semigroups and relatively bounded perturbations.
In this study, we have considered only bounded control operators in this study, and 
future work will involve extending the obtained results to systems with 
unbounded control operators.


\begin{thebibliography}{10}
	
	\bibitem{Arendt2001}
	{\sc W.~Arendt, C.~J.~K. Batty, M.~Hieber, and F.~Neubrander}, {\em
		Vector-valued Laplace Transforms and Cauchy Problems}, Basel: Birkh\"auser,
	2001.
	
	\bibitem{Bamieh1992}
	{\sc B.~A. Bamieh and J.~B. Pearson}, {\em {A general framework for linear
			periodic systems with applications to $\mathscr{H}_{\infty}$ sampled-data
			control}}, IEEE Trans. Automat. Control, 37 (1992), pp.~418--435.
	
	\bibitem{Batkai2005}
	{\sc A.~B\'atkai and S.~Piazzera}, {\em Semigroups for Delay Equations},
	Wellesley: A~K~Peters, 2005.
	
	\bibitem{Curtain1995}
	{\sc R.~F. Curtain and H.~J. Zwart}, {\em {An Introduction to
			Infinite-Dimensional Linear Systems Theory}}, New York: Springer, 1995.
	
	\bibitem{Dudley2011}
	{\sc R.~M. Dudley and N.~R.}, {\em Concrete Functional Calculus}, New York:
	Springer, 2011.
	
	\bibitem{Dym1972}
	{\sc H.~Dym and H.~P. McKean}, {\em Fourier Series and Integrals}, New York:
	Academic Press, 1972.
	
	\bibitem{Engel2000}
	{\sc K.-J. Engel and R.~Nagel}, {\em One-Parameter Semigroups for Linear
		Evolution Equations}, New York: Springer, 2000.
	
	\bibitem{Hille1957}
	{\sc E.~Hille and R.~S. Phillips}, {\em Functional Analysis and Semigroups},
	Amer. Math. Soc. Coll. Publ., vol. 31, Amer. Math. Soc., 1957.
	
	\bibitem{Kang2018Automatica}
	{\sc W.~Kang and E.~Fridman}, {\em {Distributed sampled-data control of
			Kuramoto-Sivashinsky equation}}, Automatica, 95 (2018), pp.~514--524.
	
	\bibitem{Karafyllis2018}
	{\sc I.~Karafyllis and M.~Krstic}, {\em {Sampled-data boundary feedback control
			of 1-D parabolic PDEs}}, Automatica, 87 (2018), pp.~226--237.
	
	\bibitem{Kato1959}
	{\sc T.~Kato}, {\em Remarks on pseudo-resolvents and infinitesimal generators},
	Proc. Japan Acad., 35 (1959), pp.~467--468.
	
	\bibitem{Ke2009SIAM}
	{\sc Z.~Ke, H.~Logemann, and R.~Rebarber}, {\em Approximate tracking and
		disturbance rejection for stable infinite-dimensional systems using
		sampled-data low-gain control}, SIAM J. Control Optim., 48 (2009),
	pp.~641--671.
	
	\bibitem{Ke2009IEEE}
	{\sc Z.~Ke, H.~Logemann, and R.~Rebarber}, {\em A sampled-data servomechanism
		for stable well-posed systems}, IEEE Trans. Automat. Control, 54 (2009),
	pp.~1123--1128.
	
	\bibitem{Ke2009SCL}
	{\sc Z.~Ke, H.~Logemann, and S.~Townley}, {\em Adaptive sampled-data integral
		control of stable infinite-dimensional linear systems}, Systems Control
	Lett., 58 (2009), pp.~233--240.
	
	\bibitem{Logemann2013}
	{\sc H.~Logemann}, {\em Stabilization of well-posed infinite-dimensional
		systems by dynamic sampled-data feedback}, SIAM J. Control Optim., 51 (2013),
	pp.~1203--1231.
	
	\bibitem{Logemann2003}
	{\sc H.~Logemann, R.~Rebarber, and S.~Townley}, {\em Stability of
		infinite-dimensional sampled-data systems}, Trans. Amer. Math. Soc., 355
	(2003), pp.~3301--3328.
	
	\bibitem{Logemann2005}
	{\sc H.~Logemann, R.~Rebarber, and S.~Townley}, {\em Generalized sampled-data
		stabilization of well-posed linear infinite-dimensional systems}, SIAM J.
	Control Optim., 44 (2005), pp.~1345--1369.
	
	\bibitem{Logemann1997}
	{\sc H.~Logemann and S.~Townley}, {\em Discrete-time low-gain control of
		uncertain infinite-dimensional systems}, IEEE Trans. Automat. Control, 42
	(1997), pp.~22--37.
	
	\bibitem{Miyadera1966}
	{\sc I.~Miyadera}, {\em On perturbation theory for semi-groups of operators},
	T\^ohoku Math. J., 18 (1966), pp.~299--310.
	
	\bibitem{Rebarber1998}
	{\sc R.~Rebarber and S.~Townley}, {\em Generalized sampled data feedback
		control of distributed parameter systems}, Systems \& Control Letters, 34
	(1998), pp.~229--240.
	
	\bibitem{Rebarber2002}
	{\sc R.~Rebarber and S.~Townley}, {\em {Nonrobustness of closed-loop stability
			for infinite-dimensional systems under sample and hold}}, IEEE Trans.
	Automat. Control, 47 (2002), pp.~1381--1385.
	
	\bibitem{Rebarber2006}
	{\sc R.~Rebarber and S.~Townley}, {\em {Robustness with respect to sampling for
			stabilization of Riesz spectral systems}}, IEEE Trans. Automat. Control, 51
	(2006), pp.~1519--1522.
	
	\bibitem{Tarn1988}
	{\sc T.~J. Tarn, J.~R. Zavgern, and X.~Zeng}, {\em Stabilization of
		infinite-dimensional systems with periodic gains and sampled output},
	Automatica, 24 (1988), pp.~95--99.
	
	\bibitem{Trotter1958}
	{\sc H.~Trotter}, {\em Approximation of semigroups of operators}, Pacific J.
	Math., 8 (1958), pp.~887--919.
	
	\bibitem{Wakaiki2019}
	{\sc M.~Wakaiki and H.~Sano}, {\em Sampled-data output regulation of unstable
		well-posed infinite-dimensional systems with constant reference and
		disturbance signals}.
	\newblock 2019, \url{https://arxiv.org/abs/1901.07254}.
	
	\bibitem{Wirth1994}
	{\sc F.~Wirth and D.~Hinrichsen}, {\em On stability radii of
		infinite-dimensional time-varying discrete-time systems}, IMA J. Math.
	Control Inform., 11 (1994), pp.~253--276.
	
	\bibitem{Yamamoto1994}
	{\sc Y.~Yamamoto}, {\em {A function space approach to sampled data control
			systems and tracking problems}}, IEEE Trans. Automat. Control, 39 (1994),
	pp.~703--713.
	
\end{thebibliography}
\end{document}